\documentclass[11pt]{amsart}

\usepackage{fullpage}

\usepackage{amsmath, amscd, amsthm, amssymb}

\usepackage{hyperref}

\def\C{{\mathbf C}}
\def\R{{\mathbf R}}

\def\Z{{\mathbf Z}}
\def\Q{{\mathbf Q}}
\def\A{{\mathbf A}}

\def\H{{\mathbf H}}

\newtheorem{theorem}{Theorem}[subsection]

\newtheorem{lemma}[theorem]{Lemma}

\newtheorem{proposition}[theorem]{Proposition}

\newtheorem{corollary}[theorem]{Corollary}

\theoremstyle{definition}

\theoremstyle{remark}

\newcommand{\mm}[4]{\left(\begin{smallmatrix} #1 & #2\\ #3 & #4\end{smallmatrix}\right)}
\newcommand{\mb}[4]{\left(\begin{array}{cc} #1 & #2\\ #3 & #4\end{array}\right)}

\DeclareMathOperator{\tr}{tr}

\DeclareMathOperator{\SO}{SO}
\DeclareMathOperator{\Spin}{Spin}

\DeclareMathOperator{\Sp}{Sp}

\DeclareMathOperator{\PGSp}{PGSp}

\DeclareMathOperator{\SU}{SU}

\DeclareMathOperator{\SL}{SL}
\DeclareMathOperator{\GL}{GL}

\DeclareMathOperator{\diag}{diag}

\DeclareMathOperator{\Span}{Span}

\def\p{{\mathfrak p}}

\def\so{{\mathfrak {so}}}
\def\sl{{\mathfrak {sl}}}


\def\Vm{{\mathbb{V}}}

\def\H{{\mathcal{H}}}

\def\D{{\mathcal D}}

\def\W{{\mathbf W}}
\def\X{{\mathbf X}}
\def\Y{{\mathbf Y}}


\begin{document}
\title{A quaternionic Saito-Kurokawa lift and cusp forms on $G_2$}
\author{Aaron Pollack}
\address{Department of Mathematics\\ Duke University\\ Durham, NC USA}
\email{apollack@math.duke.edu}
\thanks{The author has been supported by the Simons Foundation via Collaboration Grant number 585147.}

\begin{abstract} We consider a special theta lift $\theta(f)$ from cuspidal Siegel modular forms $f$ on $\Sp_4$ to ``modular forms" $\theta(f)$ on $\SO(4,4)$, in the sense of \cite{pollackQDS}.  This lift can be considered an analogue of the Saito-Kurokawa lift, where now the image of the lift is representations of $\SO(4,4)$ that are quaternionic at infinity.  We relate the Fourier coefficients of $\theta(f)$ to those of $f$, and in particular prove that $\theta(f)$ is nonzero and has algebraic Fourier coefficients if $f$ does.  Restricting the $\theta(f)$ to $G_2 \subseteq \SO(4,4)$, we obtain cuspidal modular forms on $G_2$ of arbitrarily large weight with all algebraic Fourier coefficients.  In the case of level one, we obtain precise formulas for the Fourier coefficients of $\theta(f)$ in terms of those of $f$.  In particular, we construct nonzero cuspidal modular forms on $G_2$ of level one with all integer Fourier coefficients.
\end{abstract}
\maketitle


\section{Introduction} Recall the notion of ``modular forms" on $G_2$ from \cite{ganGrossSavin}.  These are elements of the space $Hom_{G_2(\R)}(\pi_n,\mathcal{A}(G_2))$ of $G_2(\R)$-equivariant homomorphisms from the quaternionic discrete series $\pi_n$ \cite{grossWallach1,grossWallach2} to the space of automorphic forms on $G_2(\A)$.  Equivalently, see \cite{pollackQDS} or \cite{pollackG2}, they are certain $Sym^{2n}(\C^2)$-valued automorphic functions on $G_2(\A)$ that are annihilated by a special linear differential operator $\D_n$. In \cite{ganGrossSavin}, Gan-Gross-Savin developed the theory of the (non-degenerate) Fourier coefficients of modular forms on $G_2$ using as a key input a certain Archimedean multiplicity one result of Wallach \cite{wallach}, (see also \cite[section 15]{ganSW}.)  The full Fourier expansion, including the degenerate terms, of modular forms on quaterionic exceptional groups was then developed in \cite{pollackQDS}.  See also \cite{pollackE8}.

While the general theory of the Fourier expansion is now worked out for these modular forms on quaternionic exceptional groups, there is currently a short supply of concrete examples.  Specifically, only in \cite{ganGrossSavin} and in \cite{pollackE8} are examples given which provably have relatively nice Fourier expansions, in the sense that it is proved that most of the Fourier coefficients are algebraic numbers.  In particular, no examples have been given of \emph{cusp forms} that have all algebraic Fourier coefficients--or, for that matter, any explicit examples of nonzero cusp forms.  Thus, it is natural to ask if there exist cuspidal modular forms on $G_2$ all of whose Fourier coefficients are algebraic.  Our first theorem settles this question in the affirmative.

\begin{theorem} There are nonzero cuspidal modular forms on $G_2$ of arbitrarily large weight, all of whose Fourier coefficients are algebraic numbers. \end{theorem}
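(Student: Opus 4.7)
The plan is to realize the desired cusp forms as restrictions to $G_2$ of the theta lifts $\theta(f)$ advertised in the abstract, where $f$ runs over classical holomorphic Siegel cusp forms on $\Sp_4$ of suitable weight with algebraic Fourier coefficients.

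First, for any sufficiently large integer weight $\ell$ one can choose a nonzero cuspidal Siegel modular form $f$ on $\Sp_4$ of weight $\ell$ having all Fourier coefficients in $\overline{\Q}$; this is classical (e.g.\ via a basis of Hecke eigenforms, or simply by rationality of the space of holomorphic Siegel cusp forms). Apply the theta correspondence $\theta$ described in the abstract to produce an automorphic form $\theta(f)$ on $\SO(4,4)$ which is quaternionic at infinity, i.e.\ a modular form on $\SO(4,4)$ of some weight $n$ depending on $\ell$. Using the explicit relation between the Fourier coefficients of $\theta(f)$ and those of $f$ asserted in the abstract, one concludes that (i) $\theta(f)$ is nonzero and (ii) all Fourier coefficients of $\theta(f)$ lie in $\overline{\Q}$.

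Next, restrict $\theta(f)$ to the subgroup $G_2 \subseteq \SO(4,4)$. The resulting function $F := \theta(f)|_{G_2(\A)}$ is automatic a quaternionic modular form on $G_2$ of the same weight (quaternionicity at infinity is preserved under restriction along the natural embedding, and the operator $\D_n$ on $G_2$ is a restriction of the analogous differential operator on $\SO(4,4)$). Since the Fourier coefficients of $F$ on $G_2$ are expressible as explicit finite sums of Fourier coefficients of $\theta(f)$ (coming from the inclusion of Heisenberg-type nilpotent radicals), they remain algebraic. Cuspidality of $F$ follows because constant terms along parabolics of $G_2$ are obtained from constant terms of $\theta(f)$ along compatible parabolics of $\SO(4,4)$, and these vanish.

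The main obstacle I anticipate is the non-vanishing of the restriction $F$: a priori the restriction of a nonzero automorphic form on $\SO(4,4)$ to $G_2$ could vanish. This is where the precise Fourier coefficient formula for $\theta(f)$ does the essential work: one identifies a single nondegenerate $G_2$-Fourier coefficient of $F$ with a specific, explicitly nonzero Fourier coefficient of $f$ (or a nonzero combination thereof), thereby certifying $F \neq 0$. Letting the weight of $f$ grow yields cuspidal quaternionic modular forms on $G_2$ of arbitrarily large weight with all Fourier coefficients algebraic, which is the claim.
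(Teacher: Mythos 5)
Your overall route is the same as the paper's (theta lift from $\Sp_4$ to $\SO(4,4)$ with quaternionic archimedean data, then restrict to $G_2$), but the step where you deduce cuspidality of the restriction has a genuine gap. You assert that constant terms of $F=\theta(f)|_{G_2}$ along parabolics of $G_2$ are ``obtained from constant terms of $\theta(f)$ along compatible parabolics of $\SO(4,4)$.'' That mechanism fails: the unipotent radical $N'$ of a parabolic of $G_2$ is not the unipotent radical of a parabolic of $\SO(4,4)$, and when one computes $\int_{[N']}\theta(f)(n'g)\,dn'$ by Fourier expanding $\theta(f)$ along the larger Heisenberg unipotent $N_U$ of $\SO(4,4)$, one picks up \emph{all} characters of $N_U$ whose restriction to $N'$ is trivial, not just the trivial one. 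So cuspidality of $\theta(f)$ on $\SO(4,4)$ does not formally transfer; indeed the paper flags the cuspidality of the restriction as the surprising point. The actual argument (Lemma \ref{lem:push} and Corollary \ref{cor:restrict}) is a positivity statement: since $\theta(f)$ is cuspidal on $\SO(4,4)$, its Fourier support consists of nondegenerate $v\in W_E$ with $S(v)>0$, and the $\GL_2$-equivariant projection $W_E\to W_\R$ sends such $v$ to $v'>0$; hence the restriction has Fourier support only on positive binary cubic forms, which is what forces cuspidality on $G_2$. Your sketch contains no substitute for this lemma.

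The nonvanishing step is also thinner than it needs to be. A Fourier coefficient of $F$ attached to a cubic form $p$ is not a single coefficient of $\theta(f)$ but a finite sum over the whole fiber of $2\times2\times2$ boxes $\widetilde p$ above $p$ (and, in the level-one formula of Theorem \ref{thm:level1}, an additional Hecke-type sum over $r$), so cancellation must be excluded. The paper does this by an explicit choice: for $p=x^3-xy^2$ the only positive box above $p$ is $(-1,(0,0,0),(1,1,1),0)$, with $S(\widetilde p)=\mm{1}{0}{0}{1}$, so the $G_2$-coefficient equals $a_F(\mm{1}{0}{0}{1})$ (Corollary \ref{cor:G2level1}); alternatively, away from level one, one exploits the freedom in the finite test data $\phi'$ in Proposition \ref{prop:thetaNice} to isolate a nonzero coefficient $a_f(S(y_1,y_2))$. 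Saying ``one identifies a single nondegenerate $G_2$-coefficient with a nonzero coefficient of $f$'' names the right target but omits exactly the computation (uniqueness of the box above the chosen cubic, and control of the $r$-sum) that makes it true; also note the weight bookkeeping $N=w+2-n/2$, which with $n=4$ ties the Siegel weight to the $G_2$ weight and, together with $w\gg0$ even, is what yields arbitrarily large weight.
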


To prove this theorem, we develop an analogue of the Saito-Kurokawa lift, or more generally, the Oda-Rallis-Schiffman lift \cite{oda}, \cite{RS78,RS81}, see also \cite{kudla}.   Recall that the Saito-Kurokawa lift can be considered as a very special case of the $\theta$-lift from holomorphic modular forms on $\widetilde{\SL_2}$ to holomorphic Siegel modular forms on $\SO(5) = \PGSp_4$, and the Fourier coefficients of the lift can be neatly described in terms of those of the input on $\widetilde{\SL_2}$.  These types of special $\theta$ lifts, in turn, go back to Doi-Naganuma \cite{doiNaganuma}, Niwa \cite{niwa} and Shintani \cite{shintani}.  

The lifts we consider now start off with cuspidal Siegel modular forms $f$ on $\Sp_4$, and via very special test data for the Weil-representation, we lift them to automorphic forms on $\SO(4,4)$.  The lifts are cuspidal by a general argument of Rallis \cite[Chapter I, section 3]{rallisHD}.  With our special test data, we are able to check that the lifts are nonzero (quaternionic) modular forms in the sense of \cite{weissman} and \cite{pollackQDS}.  Moreover, we can prove that the lift to $\SO(4,4)$ preserves algebraicity in the sense that if $f$ has Fourier coefficients in some field $E$ containing the cyclotomic extension of $\Q$, then $\theta(f)$ also has Fourier coefficients in $E$.  Except for the cuspidality on $\SO(4,4)$, these properties of the theta lift partially generalize to a special quaternionic $\theta$-lift from cuspidal Siegel modular forms on $\Sp_4$ to quaternionic modular forms on $\SO(4,n)$.  Let us remark that this special $\theta$-lift from $\Sp(4)$ to $\SO(4,n)$ is also inspired by \cite{narita1} (following unpublished work of Arakawa) who lifts cuspidal modular forms on $\SL_2$ to quaternionic modular forms on $\Sp(1,q)$, which sits inside $\SO(4,4q)$.  

Continuing with the dual pair $\Sp_4 \times \SO(4,4)$, one can restrict automorphic functions on $\SO(4,4)$ to $G_2$. Perhaps surprisingly, the restriction of cuspidal nonzero modular forms on $\SO(4,4)$ to $G_2$ remains a \emph{cuspidal} nonzero modular form.  Moreover, the algebraicity of the Fourier coefficients is preserved under restriction to $G_2$.  Because the cuspidal Hecke eigenforms on $\Sp_4$ always have Fourier coefficients in a finite extension of $\Q$, the above procedure produces cuspidal modular forms on $G_2$ of arbitrarily large weight, with all algebraic Fourier coefficients.

The method of $\theta$-lifting to an orthogonal group and then restricting to $G_2$ comes from Rallis-Schiffmann \cite{RSG2}.  There, the authors lift from $\widetilde{SL_2}$ to split $\SO(7)$, and then restrict to $G_2$.  These Rallis-Schiffmann lifts to $G_2$ do not produce \emph{modular forms} on $G_2$.  Indeed, Li and Schwermer \cite[section 3.8]{liSchwermer} compute that the discrete series on $G_2(\R)$ that are in the image of the Rallis-Schiffmann lift are the ones that have minimal $K$-type nontrivial representations of the short-root $\SU(2)$, as opposed to the long-root $\SU(2)$. In particular, we know of no good way to measure the algebraicity of the Rallis-Schiffmann lifts, because they are not modular forms. See also \cite{naritaYamauchi}, where the authors construct special automorphic functions on $\SO(4,1)$ with algebraic Fourier coefficients, by restricting theta functions from $\SO(4,2)$.

In the case when the input $F(Z)$ is a \emph{level one} cuspidal holomorphic modular form on $\Sp(4)$, we obtain precise formulas for the Fourier coefficients $\theta(F)$ on $\SO(V)$ and $\theta(F)|_{G_2}$ on $G_2$.  See Theorem \ref{thm:level1} and Corollary \ref{cor:G2level1}.  As a consequence of these results, one has the following.

\begin{theorem} Suppose $F(Z)= \sum_{T > 0}{a_F(T) q^T}$ is a level one cuspidal holomorphic modular form on $\Sp(4)$ of sufficiently large weight, with Fourier coefficients $a_F(T)$ in some ring $R$.  Assume moreover that the Fourier coefficient $a_F(\mm{1}{0}{0}{1}) \neq 0$.  Then $\theta(F)|_{G_2}$ is a nonzero cuspidal modular form on $G_2$ with Fourier coefficients in $R$.\end{theorem}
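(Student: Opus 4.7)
The plan is to deduce the theorem essentially formally from the explicit Fourier coefficient formulas in Theorem \ref{thm:level1} and Corollary \ref{cor:G2level1}, combined with the structural properties of the lift $F \mapsto \theta(F)$ already discussed in the introduction. First, $\theta(F)$ on $\SO(4,4)$ is a cuspidal automorphic form by the general cuspidality argument of Rallis, and it is a quaternionic modular form thanks to the special choice of test data for the Weil representation. Both properties are preserved under restriction to $G_2 \subseteq \SO(4,4)$: cuspidality along $G_2$-parabolics, and the defining condition $\D_n \theta(F)|_{G_2} = 0$ for a modular form on $G_2$. This reduces the theorem to two concrete claims about the Fourier expansion of $\theta(F)|_{G_2}$: all its Fourier coefficients lie in $R$, and it is not identically zero.

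For the first claim I would read off the formula in Corollary \ref{cor:G2level1}, which expresses each $G_2$-Fourier coefficient of $\theta(F)|_{G_2}$ as an explicit finite sum of the form $\sum_i c_i\, a_F(T_i)$, where the $T_i$ are the positive semidefinite symmetric $2\times 2$ matrices contributing to the given $G_2$-index and the $c_i$ are universal rational (in fact integral) constants independent of $F$. Since by hypothesis each $a_F(T_i) \in R$, the corresponding $G_2$-Fourier coefficient lies in $R$.

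For nonvanishing, the strategy is to locate a single $G_2$-Fourier index $\beta_0$ for which the sum in Corollary \ref{cor:G2level1} collapses to one term, namely a nonzero rational multiple of $a_F(\mm{1}{0}{0}{1})$. Since $a_F(\mm{1}{0}{0}{1}) \neq 0$ by hypothesis, this single $G_2$-Fourier coefficient is nonzero, whence $\theta(F)|_{G_2} \neq 0$. The main obstacle I anticipate is confirming that such a $\beta_0$ genuinely exists, i.e.\ that there is a $G_2$-index whose associated constraint in the lift formula is satisfied by $T = \mm{1}{0}{0}{1}$ alone, so that no cancellation with other $T$'s is possible. Because the formula of Corollary \ref{cor:G2level1} sums over $T$ constrained by a fixed definite pairing on a rank-$2$ sublattice, and $\mm{1}{0}{0}{1}$ is the simplest positive-definite symmetric $2\times 2$ matrix, this reduces to a direct combinatorial check at the smallest $G_2$-index of the appropriate type.
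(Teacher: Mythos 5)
Your proposal is correct and follows essentially the same route as the paper: cuspidality and modularity of $\theta(F)|_{G_2}$ come from the lift plus restriction (Corollary \ref{cor:restrict}/Corollary \ref{cor:G2level1}), the coefficients lie in $R$ because Corollary \ref{cor:G2level1} expresses each one as an integral combination of the $a_F(T)$, and nonvanishing comes from an index where the sum collapses. The ``combinatorial check'' you anticipate is exactly the final statement of Corollary \ref{cor:G2level1}: the only positive box above the cubic $p(x,y)=x^3-xy^2$ is $(-1,(0,0,0),(1,1,1),0)$, whose associated matrix is $\mm{1}{0}{0}{1}$, so that Fourier coefficient equals $a_F(\mm{1}{0}{0}{1})\neq 0$.
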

The theorem produces cuspidal modular forms on $G_2$ with all integral Fourier coefficients.

\noindent \textbf{Acknowledgements}  We thank Hiro-aki Narita for helpful conversations.

\section{Generalities on the theta lift}
In this section, we describe those results on the $\theta$-lift from $\Sp_4$ to $\SO(4,n)$ that do not depend on our specific test data or the notion of modular forms on quaternionic groups. 

\subsection{Weil representation}\label{subsec:Weil} We discuss various notations, definitions, and recall well-known facts for the Weil representation \cite{weil} restricted to $\SO(V) \times \Sp(W)$ when $\dim(V)=m$ is even.  See also, e.g., \cite{kudlaInv},\cite{rallisHD, rallisLF}.

Thus suppose $k$ is a local or global field of characteristic $0$.  Let $(V,q)$ be a non-degenerate quadratic space over $k$, and let $(x,y) = q(x+y)-q(x)-q(y)$ be the associated symmetric bilinear form, so that $q(x) = \frac{1}{2}(x,x)$.  Set $\det(V) \in k^\times/(k^\times)^2$ the element $\det((v_i,v_j)_{i,j})$ where $v_1, \ldots, v_m$ is a basis of $V$.  Set $\mathrm{disc}(V) = (-1)^{m(m-1)/2}\det(V) \in k^\times/(k^\times)^2$ the discriminant of $V$.  When $k$ is a local field, let $(a,b) \in \mu_2$ be the Hilbert symbol of $k$ and denote by $\chi_V: k^\times \rightarrow \mu_2$ the character $\chi_V(x) = (x,\mathrm{disc}(V))$.  When $k$ is a global field, write $\chi_V$ for the quadratic character of $\A_k$ whose local components are the $(\cdot,\mathrm{disc}(V))$ just defined. 

Let $\psi: \A/k \rightarrow \C^\times$ be a fixed additive character.  Below when $k=\Q$, we will take $\psi$ to be the standard choice, so that $\psi_\infty(x) = e^{2\pi i x}$.  For $f$ a Schwartz-Bruhat function, we write
\[\widehat{f}(x) = \int_{V}{\psi((y,x))f(y)\,dy}\]
the Fourier transform of $f$.  The measure $dy$ is normalized so that $\widehat{\widehat{f}}(x) = f(-x)$.

For $\rho \in k^\times$, define the Weil index $\gamma(\psi,\rho q(\cdot))$ the element of $\C^\times$ so that
\[
\int_{V}{\psi(\rho q(x)) \widehat{f}(x)\,dx} = \gamma(\psi, \rho q(\cdot)) |\rho|^{-m/2} \int_{V}{\psi(- \rho^{-1}q(x))f(x)\,dx}\]
for all Schwartz-Bruhat functions $f$.

Now suppose $W$ is a symplectic space over $k$, and $W = X \oplus Y$ is a Langrangian decomposition.  Moreover, assume given a symplectic basis $e_1, \ldots, e_n, f_1, \ldots, f_n$ for $W$, so that $X = \Span\{e_1, \ldots, e_n\}$ and $Y = \Span\{f_1,\ldots, f_n\}$.   We write $\X  = X\otimes V = V^n$ and $\Y = Y \otimes V = V^n$.  When $m = \dim(V)$ is even, we have a Weil representation $\omega_\psi$ of $\SO(V) \times \Sp(W)$ on the Schwartz space $S(\X)$ locally and globally.  

We let $\SO(V)$ and $\Sp(W)$ act on the right of $V$, resp. $W$.  Then if $g \in \SO(V)$ and $\phi \in S(X)$ one has $\omega_\psi(g,1)\phi(x) = \phi(xg)$.  The Weil representation restricted to $\Sp(W)$ is the unique representation for which 
\begin{equation}\label{eqn:Paction} \mb{\alpha}{\beta}{}{\,^t\alpha^{-1}}\phi(x) = \chi_V(\det(\alpha)) |\det(\alpha)|^{m/2}\psi(\langle x\alpha, x\beta \rangle/2) \phi(x\alpha)\end{equation}
and
\[\mb{0}{1}{-1}{0}\phi(x) = \gamma(\psi,q)^{n} \widehat{\phi}(x)\]
where
\[\widehat{\phi}(x) = \int_{V^n}{\psi(\tr((x_i,y_j)))\phi(e_1\otimes y_1 + \cdots + e_n \otimes y_n)\,dy_1 \cdots \,dy_n}.\]

We will require the use of a partial Fourier transform to go between different models of this Weil representation.  We explain this now.  Suppose $V = U \oplus V_0 \oplus U^\vee$ with $U, U^\vee$ isotropic, paired nontrivially via the symmetric form, and $V_0$ non-degenerate, $V_0 = (U\oplus U^\vee)^\perp$.  Note that $\mathrm{disc}(V_0) = \mathrm{disc}(V)$ so that $\chi_{V} = \chi_{V_0}$.  With $\X_U := X \otimes V_0 \oplus W \otimes U$ and $\Y_U := Y \otimes V_0 \oplus W \otimes U^\vee$ one has that $\W = W \otimes V = \X_U \oplus \Y_U$ is a Lagrangian decomposition.

There is a different model of $\omega_\psi$, now on $S(\X_U)$.  The intertwining operator between these two models is given by a partial Fourier transform.  This transform is defined as follows.  First, we have
\[
\X = (U \oplus V_0 \oplus U^\vee) \otimes X = U \otimes X \oplus V_0 \otimes X \oplus U^\vee \otimes X
\]
and
\[
\X_U = U \otimes W \oplus V_0 \otimes X = U \otimes X \oplus V_0 \otimes X \oplus U \otimes Y.
\]
Then if $\phi \in S(\X)$,
\[
F(\phi)(x,y,z) = \int_{(U^\vee \otimes X)(\A)}{\psi(\langle z', z\rangle)\phi(x,y,z')\,dz'}
\]
where $x \in U \otimes X$, $y \in V_0 \otimes X$, $z \in U \otimes Y$ and $z' \in U^\vee \otimes X$.

The partial Fourier transform defines an isomorphism $S(\X) \rightarrow S(\X_U)$.  Thus by transport of structure, there is Weil representation $\omega_{\psi,\Y_U}$ of $\SO(V) \times \Sp(W)$ on $S(\X_U) = S(X \otimes V_0) \otimes S(W \otimes U)$.  This representation has the following well-known and useful property.  Denote by $P_U$ the parabolic subgroup of $\Sp(\W)$ that stabilizes $\Y_U$.  For $p \in P_U$, let $\det_{P_U}(p) = \det(p: \W/\Y_U \rightarrow \W/\Y_U)$ be the determinant of the map induced by $p$ on $\W/\Y_U$.  Finally, set $p_{\X_U}$ resp $p_{\Y_U}$ the projections $\W \rightarrow \X_U$ and $\W \rightarrow \Y_U$.

\begin{proposition} Assume that $\mathrm{disc}(V) = 1$.  Suppose $p \in P_U \cap \left(\SO(V) \times \Sp(W)\right)$ and $\phi \in S(\X_U)$. Then
\begin{equation}\label{PXU}
\omega_{\psi,\Y_U}(p)\phi(x) = |\det_{P_U}(p)|^{1/2} \psi(\langle p_{\X_U}(xp), p_{\Y_U}(yp)\rangle/2) \phi(p_{\X_U}(xp)).\end{equation}
In particular, $\Sp(W)$ acts linearly on $S(W \otimes U)$.
\end{proposition}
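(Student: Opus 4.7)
The plan is to identify the transported Weil representation $\omega_{\psi,\Y_U}$ on $S(\X_U)$ as the Schr\"odinger model of the Weil representation of the ambient symplectic group $\Sp(\W) = \Sp(W\otimes V)$ attached to the Lagrangian decomposition $\W = \X_U\oplus\Y_U$, and then invoke the classical formula for the action of the Siegel parabolic preserving $\Y_U$ in that Schr\"odinger model.

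First, I observe that the partial Fourier transform $F\colon S(\X)\to S(\X_U)$ is, by inspection, the standard intertwiner between the two Schr\"odinger models of $\omega_\psi$ (viewed as a representation of $\Sp(\W)$): one attached to the Lagrangian $\Y = Y\otimes V$ and the other to $\Y_U = Y\otimes V_0\oplus W\otimes U^\vee$. The only directions along which one must Fourier-transform to pass between them are $U^\vee\otimes X \leftrightarrow U\otimes Y$, which is exactly what $F$ does. Hence it suffices to establish the displayed formula for the Siegel parabolic of $\Sp(\W)$ stabilizing $\Y_U$, now acting on $S(\X_U)$.

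For such a parabolic in any symplectic group there is a classical formula
\[\omega_\psi(\tilde p)\phi(x) = \epsilon(\tilde p)\,|\det(\tilde p|_{\W/\Y_U})|^{1/2}\,\psi(\tfrac{1}{2}\langle p_{\X_U}(x\tilde p), p_{\Y_U}(x\tilde p)\rangle)\,\phi(p_{\X_U}(x\tilde p)),\]
directly analogous to (\ref{eqn:Paction}); here $\epsilon$ is a character determined by the chosen lift of the parabolic to the metaplectic cover. The hypothesis $\mathrm{disc}(V)=1$ makes the character $\chi_V$ appearing in (\ref{eqn:Paction}) trivial, and this is precisely what is needed to force $\epsilon\equiv 1$ on the image of $\SO(V)\times\Sp(W)$ in $\Sp(\W)$. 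The result is exactly the formula (\ref{PXU}).

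The main obstacle is the bookkeeping around the metaplectic character $\epsilon$: one must verify that the splitting $\SO(V)\times\Sp(W)\hookrightarrow\Sp(\W)$ used in defining $\omega_\psi$ on $S(\X)$ (which is pinned down by (\ref{eqn:Paction}) together with the Fourier-transform formula at $\mm{0}{1}{-1}{0}$) is compatible with the $\Y_U$-model formula above, so that the single assumption $\chi_V=1$ simultaneously kills every multiplier. For the final assertion, $\Sp(W)$ preserves the isotropic subspace $W\otimes U\subseteq \W$, so for $h\in\Sp(W)$ and $x\in W\otimes U$ one has $xh\in W\otimes U$; hence $p_{\Y_U}(xh)=0$ and $|\det_{P_U}(h)|=1$, and the formula collapses, on the factor $S(W\otimes U)$ of the tensor product $S(\X_U)=S(X\otimes V_0)\otimes S(W\otimes U)$, to the linear action $\phi\mapsto\phi(xh)$.
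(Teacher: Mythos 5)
Your treatment of the displayed formula is essentially the paper's own argument: the paper also reduces to the classical Schr\"odinger-model formula for the parabolic stabilizing $\Y_U$, citing Kudla's notes and Rallis, and notes that it can be verified by writing $\phi = F(\phi')$ and checking on generators of $\SO(V)$ and $\Sp(W)$; the hypothesis $\mathrm{disc}(V)=1$ enters exactly where you place it, to trivialize $\chi_V$ and hence the multiplier. The ``bookkeeping around $\epsilon$'' that you flag as the main obstacle is precisely what the generator check accomplishes, so on this point your proposal and the paper are at the same level of detail.

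There is, however, a genuine gap in your last paragraph. The formula \eqref{PXU} applies only to $p \in P_U$, i.e.\ to elements stabilizing $\Y_U$, and $\Sp(W)$ is \emph{not} contained in $P_U$: for $h \in \Sp(W)$ one has $(V_0 \otimes Y)h = V_0 \otimes (Yh)$, which lies in $\Y_U$ only when $Yh = Y$, so $\Sp(W) \cap P_U = P_Y$. In particular, for $h = \mm{0}{1}{-1}{0}$ the quantity $\det_{P_U}(h)$ is not even defined, and you cannot ``collapse the formula'' for such $h$. The assertion is still true, but it requires an extra input: the mixed model factors as $S(\X_U) = S(V_0 \otimes X) \otimes S(U \otimes W)$ compatibly with the symplectic decomposition $\W = (V_0 \otimes W) \oplus \bigl((U \oplus U^\vee) \otimes W\bigr)$, and $\Sp(W)$ preserves both $U \otimes W$ and $U^\vee \otimes W$, hence lies in the Levi $\GL(U \otimes W)$ of the Siegel parabolic of $\Sp\bigl((U\oplus U^\vee)\otimes W\bigr)$ stabilizing $U^\vee \otimes W$. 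Since $\det(h|_{U\otimes W}) = \det(h)^{\dim U} = 1$, the Levi formula (the analogue of \eqref{eqn:Paction}) degenerates to the purely geometric action $\phi \mapsto \phi(xh)$ on that tensor factor. Equivalently, check the generators separately: $P_Y$ is covered by \eqref{PXU}, while the Weyl element acts in the mixed model by a partial Fourier transform in the $V_0 \otimes X$ variables only, times the geometric action on $U \otimes W$.
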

\begin{proof} As mentioned, this is well-known. See, for example, Kudla's notes ``The local theta correspondence", Lemma 4.2 or \cite[pg 340-341]{rallisHD}. Because one already knows that the Weil representation exists on $S(\X)$, the proposition can be checked by assuming $\phi = F(\phi')$ for $\phi' \in S(\X)$ and checking \eqref{PXU} for generators of $\Sp(W)$ and $\SO(V)$.\end{proof}

If $\phi \in S(\X(\A))$, the $\theta$-function associated to $\phi$ is
\[\theta(g,h;\phi) = \sum_{\xi \in \X(k)}{\omega_{\psi}(g,h)\phi(\xi)}.\]
Here $g \in \SO(V)(\A)$ and $h \in \Sp(W)(\A)$.  The function $\theta(g,h;\phi)$ is an automorphic form on $\SO(V)(\A) \times \Sp(W)(\A)$.

If $\phi' = F(\phi) \in S(\X_U)$, then one can similarly define
\[\theta(g,h;\phi') = \sum_{\xi \in \X_U(k)}{\omega_{\psi,\Y_U}(g,h)\phi'(\xi)}.\]
\begin{proposition}\label{prop:PFglobal} Let the notation be as above, so that $\phi'=F(\phi)$.  Then $\theta(g,h;\phi') = \theta(g,h;\phi)$.\end{proposition}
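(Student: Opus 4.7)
The plan is to reduce the identity to a Poisson summation argument by exploiting the fact that the partial Fourier transform $F$ is, by construction, the intertwiner between the two realizations of the Weil representation.

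First I would observe that since $\omega_{\psi,\Y_U}$ is defined on $S(\X_U)$ by transport of structure, we have $\omega_{\psi,\Y_U}(g,h)\circ F = F\circ \omega_\psi(g,h)$ for all $(g,h)\in\SO(V)(\A)\times\Sp(W)(\A)$. Setting $\Phi = \omega_\psi(g,h)\phi \in S(\X(\A))$, this reduces the claim to the identity
\[\sum_{\xi\in\X_U(k)} F(\Phi)(\xi) \;=\; \sum_{\xi\in\X(k)}\Phi(\xi),\]
valid for any Schwartz–Bruhat $\Phi$ on $\X(\A)$. Thus it suffices to prove Proposition \ref{prop:PFglobal} in the ``untwisted'' case $g=h=1$, uniformly in $\Phi$.

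Next I would carry out the Poisson summation. Write the common summand $U\otimes X \oplus V_0\otimes X$ coordinates as $(x,y)$, and use $z\in U\otimes Y$, $z'\in U^\vee\otimes X$ for the transversal coordinates in $\X_U$ and $\X$ respectively. Unfolding the definition of $F$,
\[\sum_{(x,y,z)\in\X_U(k)} F(\Phi)(x,y,z) \;=\; \sum_{(x,y)} \sum_{z\in (U\otimes Y)(k)} \int_{(U^\vee\otimes X)(\A)} \psi(\langle z',z\rangle)\,\Phi(x,y,z')\,dz'.\]
For fixed $(x,y)$, the inner integral is precisely the Fourier transform of $\Phi(x,y,\cdot)\in S((U^\vee\otimes X)(\A))$ with respect to the nondegenerate pairing $\langle\cdot,\cdot\rangle\colon (U^\vee\otimes X)\times(U\otimes Y)\to k$ induced by the symmetric form on $V$ and the symplectic form on $W$. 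Poisson summation, applied on the compatible dual $k$-rational lattices $(U^\vee\otimes X)(k)$ and $(U\otimes Y)(k)$, then converts the inner double sum-integral to $\sum_{z'\in (U^\vee\otimes X)(k)}\Phi(x,y,z')$, and reassembling yields $\sum_{\xi\in\X(k)}\Phi(\xi)$, which is the required identity.

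The only real subtlety, and the step I would treat most carefully, is matching the measure normalizations so that Poisson summation applies directly. The convention $\widehat{\widehat{f}}(x)=f(-x)$ on $V$ fixes a self-dual Haar measure on $V(\A)$ relative to $\psi\bigl((\cdot,\cdot)\bigr)$, and one must check that the resulting product measure on $(U^\vee\otimes X)(\A)$ is self-dual with respect to the pairing $\psi(\langle\cdot,\cdot\rangle)$ with $(U\otimes Y)(\A)$, so that $(U^\vee\otimes X)(k)$ and $(U\otimes Y)(k)$ are mutually dual discrete cocompact subgroups. Once that compatibility is in place, Poisson summation is immediate and the two theta series coincide.
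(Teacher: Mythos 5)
Your argument is correct and is exactly the route the paper takes: the paper's proof is the one-line remark that the identity ``follows from Poisson summation,'' and your write-up simply fills in the standard details (reduction via the intertwining property of $F$ to the case $g=h=1$, then Poisson summation in the transversal variable with the self-dual measure making $(U^\vee\otimes X)(k)$ and $(U\otimes Y)(k)$ mutually dual). No discrepancy to report.
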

\begin{proof} This follows from Poisson summation, and is well-known.\end{proof}

Finally, given a cuspidal automorphic form $f$ on $\Sp(W)$ and $\phi \in S(\X(\A))$, the theta-lift of $f$ is
\[\theta(f,\phi)(g) = \int_{\Sp(W)(k)\backslash \Sp(W)(\A)}{\theta(g,h;\phi)f(h)\,dh}.\]
By Proposition \ref{prop:PFglobal},
\[\theta(f,\phi)(g) = \int_{\Sp(W)(k)\backslash \Sp(W)(\A)}{\theta(g,h;F(\phi))f(h)\,dh}.\]

\subsection{Definitions and notation} We now give the specific notations that we will use below. 

Throughout the paper $n = 8k+4$ for a non-negative integer $k$.  Moreover, $V$ is an $8k+8$-dimensional $\Q$-vector space that has a quadratic form $q$ of signature $(4,n) = (4,8k+4)$.  Inside of $V$, $\Lambda \subseteq V$ is an even unimodular lattice for the quadratic form $q$. Thus we assume that the discriminant of $V$ is trivial.  The symmetric bilinear form associated to $q$ is $(x,y) = q(x+y)-q(x)-q(y)$ for $x,y \in V$.

We fix a decomposition
\[ 
V = U \oplus V_0 \oplus U^\vee
\]
as above where $U, U^\vee$ are isotropic and two-dimensional, and duals to each other under the symmetric pairing on $V$.  Here $V_0 = (U \oplus U^\vee)^\perp$ is an orthogonal space of signature $(2,n-2)$, and we denote by $q_0$ the restriction of $q$ to $V_0$.  We write $b_1, b_2$ for a fixed basis of $U$, and $b_{-1}, b_{-2}$ for the dual basis of $U^\vee$.

Over $\R$, we fix an orthogonal decomposition $V_0 \otimes \R = \C \oplus V_+$ so that $q_0(z,h) = |z|^2 - q_+(h)$, where $(V_+, q_+)$ is a positive-definite orthogonal space.  With this decomposition fixed, we have $q((x,(z,h),\delta)) = \delta(x) + |z|^2 - q_+(h)$, where $x \in U$, $\delta \in U^\vee$ and $z,h$ as above.  

We obtain a majorant of $q$ as follows.  Define $\iota: U \rightarrow U^\vee$ via $\iota(b_j) = b_{-j}$ for $j = 1,2$.  Extend $\iota$ to $V$ by defining $\iota(z,h) = (z,-h)$ for $(z,h) \in V_0$.  Then
\[ 
((x,z,h,\delta),\iota(x,z,h,\delta)) = (x,\iota(x)) + (\iota(\delta),\delta) + 2|z|^2 + 2 q_+(h) \geq 0.
\]
We set $r(v) = ||v||^2 = \frac{1}{2} (v,\iota(v))$ the majorant.

Denote by $W$ the defining four-dimensional representation of $\Sp(4) = \Sp(W)$.  We fix a polarization $W = X \oplus Y$ of Lagrangian subspaces.  Write $e_1, e_2, f_1, f_2$ for a fixed symplectic basis of $W$, so that $X$ is spanned by $e_1, e_2$ and $Y$ by $f_1, f_2$.

The vector space $\W = V \otimes W$ comes equipped with the symplectic form $(\,,\,) \otimes \langle \,,\,\rangle$.  As in subsection \ref{subsec:Weil}, we set $\X = V \otimes X$, $\Y = V \otimes Y$, $\X_U = U \otimes W \oplus V_0 \otimes X$, $\Y_U = U^\vee \otimes W \oplus V_0 \otimes Y$, so that $\W = \X \oplus \Y = \X_U \oplus \Y_U$ are isotropic decompositions.

We let our groups act on the right of the spaces that define them, i.e., we let $\SO(V)$ act on the right of $V$ and $\Sp(W)$ act on the right of $W$.  Denote by $P_Y = M_Y N_Y$ the Siegel parabolic subgroup of $\Sp(W)$ that is the stabilizer of $Y$ and $M_Y$ the Levi subgroup fixing the decomposition $X \oplus Y$.  Thus $M_Y \simeq \GL(X) \simeq \GL_2$ and we identify $N_Y$ with the symmetric $2 \times 2$ matrices.  Denote by $P_U = M_U N_U$ the Heisenberg parabolic subgroup of $\SO(V)$ that stabilizes the isotropic subspace $U^\vee$ and $M_U$ the Levi subgroup that fixes the decomposition $V = U \oplus V_0 \oplus U^\vee$. Thus $M_U \simeq \GL(U) \times \SO(V_0) \simeq \GL_2 \times \SO(V_0)$. Finally, we denote $Z = [N_U,N_U]$ the center of $N_U$.  Thus, the unipotent group $Z$ is one-dimensional and spanned by the root space for the highest root of $\SO(V)$.

\subsection{The Fourier coefficients of the lift} 
Given a character $\chi: N_U(\Q)\backslash N_U(\A) \rightarrow \C^\times$ of the Heisenberg parabolic of $\SO(V)$, we can compute the $\chi$-Fourier coefficient of a $\theta$-lift:
\begin{align*} 
\theta(f,\phi)_\chi(g) &= \int_{[N_U]}{\chi^{-1}(n)\theta(f,\phi)(ng)\,dn} \\ &= \int_{[N_U] \times [\Sp(W)]}{\chi^{-1}(n)\theta(ng,h;\phi)f(h)\,dh\,dn} \\ &= \int_{[Sp(W)]}{\theta_{\chi}(g,h;\phi)f(h)\,dh}
\end{align*}
where
\[
\theta_{\chi}(g,h;\phi) = \int_{[N_U]}{\chi^{-1}(n)\theta(ng,h;\phi)\,dn}.
\]
If $\phi \in \X_U(\A)$ we can relate $\theta(f,\phi)_\chi(g)$ to the Fourier coefficients of $f$ along $N_Y$, as follows.

Suppose $v_1, v_2 \in V_0$.  We can associate to the pair $v_1, v_2$ a character of $N_U$ and $N_Y$, as follows.  First, one identifies the abelianized unipotent radical $N_U/Z$ with $V_0 \otimes U^\vee$ via the exponential map on the Lie algebra $\so(V) \simeq \wedge^2 V$.  More precisely, if $\delta \in U^\vee$, $v \in V_0$, and $x \in V$, then
\begin{align*} 
x\exp(v \otimes \delta) &= x + x(v \otimes \delta)+ \frac{1}{2} x(v \otimes \delta)^2 \\ &= x + (v,x) \delta - (\delta,x) v - \frac{1}{2}(\delta,x)(v,v) \delta.
\end{align*}
This defines an element of $N_U$, and we denote by $n(v \otimes \delta)$ the associated element of $N_U/Z$. We record now the following formulas:
\begin{align*} 
b_1 \exp(x_1b_{-1} + x_2 b_{-2}) &= b_1 -x_1 - \frac{1}{2}((x_1,x_1) b_{-1} + (x_2,x_1) b_{-2}) \\ b_2 \exp(x_1 b_{-1} + x_2 b_{-2}) &= b_2 - x_2 - \frac{1}{2}((x_1,x_2) b_{-1} + (x_2,x_2) b_{-2}) \\ y \exp(x_1 b_{-1} + x_2 b_{-2}) &= y + (x_1,y)b_{-1} + (x_2,y)b_{-2}.
\end{align*}

Associated to $v_1, v_2$ the element $v_1 \otimes b_1 + v_2 \otimes b_2 \in V_0 \otimes U$ gives a linear map $L_{v_1,v_2}: N_U/Z \simeq V_0 \otimes U^\vee \rightarrow \Q$ defined as 
\[
L_{v_1,v_2}(n(x_{1}\otimes b_{-1} + x_{2} \otimes b_{-2})) = (v_1,x_1) + (v_2,x_2).
\]
We then obtain a character $\chi_{v_1,v_2}: N_U(\Q)\backslash N_U(\A) \rightarrow \C^\times$ as $\chi_{v_1,v_2}(n(x \otimes \delta)) = \psi(L_{v_1,v_2}(x \otimes \delta.))$.

Associated to $v_1, v_2$ we can also define a character on $N_Y$, as follows.  First, using the basis $e_1, e_2, f_1, f_2$, one has an identification $N_Y \simeq Sym^2(X) \simeq Sym^2(\Q^2)$.  If $x \in Sym^2(\Q^2)$ is a $2 \times 2$ symmetric matrix, let $n_Y(x) = \mm{1}{x}{0}{1} \in \Sp(4)$ be the associated unipotent element.  Suppose $T$ is a $2 \times 2$ rational symmetric matrix.  Associated to $T$, we have a character $N_Y(\Q)\backslash N_Y(\A) \rightarrow \C^\times$ as $\chi_T(n_Y(x)) = \psi(\tr(T,x))$.  For an automorphic function $f$ on $\Sp(W)(\A)$, we write
\[
f_T(g) = f_{\chi_T}(g) = \int_{[N_Y]}{\chi_T^{-1}(n)f(ng)\,dn}
\]
the $\chi_T$-Fourier coefficient of $f$.  Finally, if $v_1, v_2 \in V_0$, set 
\[
S(v_1,v_2) = \frac{1}{2}\mb{(v_1,v_1)}{(v_1,v_2)}{(v_1,v_2)}{(v_2,v_2)}
\]
a $2 \times 2$ symmetric matrix.  Thus $\chi_{S(v_1,v_2)}$ is a character of $N_Y(\Q)\backslash N_Y(\A)$.

The next proposition is familiar from work of Piatetski-Shapiro \cite{PSsk} and Rallis \cite{rallisHD}.  For $\phi$ in $S(\X_U(\A))$, it will relate the $\chi_{v_1,v_2}$ Fourier coefficient of $\theta(f,\phi)$ to the $\chi_{S(v_1,v_2)}$ Fourier coefficient of $f$.
\begin{proposition}\label{prop:FC1} Suppose $f$ is a cuspidal automorphic function on $\Sp(W)(\A)$, $\phi \in S(\X_U(\A))$, and $\theta(f,\phi)$ the automorphic function on $\SO(V)$ that is the theta-lift of $f$.  Suppose $v_1, v_2 \in V_0$ and that the pair is non-degenerate in the sense that $S(v_1,v_2)$ has nonzero determinant.  Set
\[
z_{v_1,v_2} = b_1 \otimes f_1 + b_2 \otimes f_2 + v_1 \otimes e_1 + v_2 \otimes e_2 \in \X_U = U \otimes W \oplus V_0 \otimes X.
\]
Then
\begin{equation}\label{eqn:Tchi}
\theta(f,\phi)_{\chi_{-v_1,-v_2}}(g) = \int_{N_Y(\A)\backslash \Sp(W)(\A)}{ \omega_{\psi,\Y_U}(g,h)\phi(z_{v_1,v_2}) f_{-S(v_1,v_2)}(h)\,dh}.
\end{equation}
\end{proposition}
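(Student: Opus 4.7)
The plan is to follow the standard Piatetski-Shapiro--Rallis unfolding. I work throughout in the $S(\X_U)$-model (legitimate by Proposition \ref{prop:PFglobal}), since in this model the $\Sp(W)$-action on $S(U\otimes W)$ is linear and the $N_U$-action on $S(\X_U)$ is governed by equation \eqref{PXU}. After substituting the definitions and switching the order of integration I obtain
\[
\theta(f,\phi)_{\chi_{-v_1,-v_2}}(g) = \int_{[\Sp(W)]} f(h) \int_{[N_U]} \chi_{-v_1,-v_2}^{-1}(n) \sum_{\xi \in \X_U(\Q)} \omega_{\psi,\Y_U}(ng,h)\phi(\xi) \, dn \, dh.
\]
For $n \in N_U \subset P_U$, equation \eqref{PXU} gives $\omega_{\psi,\Y_U}(n)\Phi(\xi) = \psi(\langle p_{\X_U}(\xi n), p_{\Y_U}(\xi n)\rangle/2) \Phi(p_{\X_U}(\xi n))$, and one checks directly that $p_{\X_U}(\xi n)$ shifts only the $V_0 \otimes X$-component of $\xi$, by an amount linear in its $U \otimes W$-component and in the $N_U/Z$-coordinate of $n$.

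I next factor $[N_U] = [Z] \times [N_U/Z]$ and evaluate each piece. The $[Z]$-integration produces a phase linear in the central variable whose coefficient is a $\wedge^2 U$-valued quadratic invariant of the $U \otimes W$-component of $\xi$; integrating forces this invariant to vanish. The subsequent $[N_U/Z] \cong [V_0 \otimes U^\vee]$-integration against $\chi_{-v_1,-v_2}^{-1}$ picks off precisely those $\xi$'s whose $V_0 \otimes X$-component matches the expected linear combination of $v_1, v_2$ dictated by the $U$-part of $\xi$. Using the non-degeneracy of $(v_1,v_2)$, a short orbit analysis identifies the surviving $\xi$'s as $\{p_{\X_U}(z_{v_1,v_2}\gamma) : \gamma \in N_Y(\Q)\backslash\Sp(W)(\Q)\}$; here the $N_Y(\Q)$-ambiguity arises because $z_{v_1,v_2} n_Y(s) - z_{v_1,v_2} \in \Y_U$ for every $n_Y(s) \in N_Y$. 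Moreover, the relevant phases $\psi(\langle p_{\X_U}(z_{v_1,v_2}\gamma), p_{\Y_U}(z_{v_1,v_2}\gamma)\rangle/2)$ are trivial for $\gamma \in \Sp(W)(\Q)$, since the pairing lies in $\Q$ and $\psi$ is trivial on $\Q$, so the orbit sum collapses to $\sum_{\gamma \in N_Y(\Q)\backslash\Sp(W)(\Q)} \omega_{\psi,\Y_U}(g, \gamma h)\phi(z_{v_1,v_2})$.

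Standard unfolding of the $[\Sp(W)]$-integral against $f$, using left-$\Sp(W)(\Q)$-invariance of $f$, then yields an integral over $N_Y(\Q)\backslash \Sp(W)(\A)$. I split this as $[N_Y] \times N_Y(\A)\backslash\Sp(W)(\A)$ and apply \eqref{PXU} one more time for the inner $[N_Y]$-integration: since $p_{\X_U}(z_{v_1,v_2}n_Y(s)) = z_{v_1,v_2}$, a direct computation of the symplectic pairing yields
\[
\omega_{\psi,\Y_U}(1,n_Y(s))\phi(z_{v_1,v_2}) = \psi(\tr(S(v_1,v_2)\, s)) \, \phi(z_{v_1,v_2}) = \chi_{S(v_1,v_2)}(n_Y(s))\, \phi(z_{v_1,v_2}).
\]
The $[N_Y]$-integration against $f$ therefore extracts the Fourier coefficient $f_{-S(v_1,v_2)}(h)$, producing the claimed formula.

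The main obstacle will be the orbit identification in the second paragraph: explicitly matching the $\xi$'s that survive the $N_U$-integration to the set $\{p_{\X_U}(z_{v_1,v_2}\gamma)\}$ with the correct $N_Y(\Q)$-fibers. This is where the non-degeneracy hypothesis $\det S(v_1,v_2) \neq 0$ enters, ruling out contributions from smaller orbits where the $U\otimes W$-component of $\xi$ is degenerate.
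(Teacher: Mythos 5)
Your proposal is correct and follows essentially the same route as the paper: the Piatetski-Shapiro--Rallis unfolding in the mixed model, with the $[Z]$-integration forcing $\langle w_1,w_2\rangle=0$, the $[N_U/Z]$-integration plus the non-degeneracy of $S(v_1,v_2)$ eliminating the rank $\leq 1$ orbits, identification of the open orbit with $N_Y(\Q)\backslash\Sp(W)(\Q)$, and the final $[N_Y]$-phase computation $\psi(\tr(S(v_1,v_2)s))$ extracting $f_{-S(v_1,v_2)}$. The only cosmetic difference is that the paper first reduces to $g\in\SO(V_0)$ and $\phi=\phi_U\otimes\phi_{V_0}$ (extending afterwards by linearity and translation) so as to factor the theta series, whereas you carry the full sum over $\xi\in\X_U(\Q)$ throughout; the underlying computation is the same.
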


\begin{proof} Following the arguments of \cite[section 5]{PSsk}, first suppose that $g \in \SO(V_0)(\A)$ and $\phi = \phi_U \otimes \phi_{V_0}$, with $\phi_U \in S(U \otimes W)$ and $\phi_{V_0} \in S(V_0 \otimes X)$.  Then
\begin{align*}
\theta(g,h;\phi) &= \sum_{w_1,w_2 \in W}\sum_{y_1, y_2 \in V_0}\omega_{\psi, U^\vee \otimes W}(1,h)\phi_U(b_1 w_1 + b_2 w_2) \omega_{\psi, V_0\otimes Y}(g,h)\phi_{V_0}(y_1 e_1 + y_2 e_2) \\ &= \theta_{V_0 \otimes Y}(g,h;\phi_{V_0}) \left(\sum_{w_1, w_2 \in W}{\omega_{\psi, U^\vee \otimes W}(1,h)\phi_U(b_1 w_1 + b_2 w_2)}\right)
\end{align*}
in obvious notation.

Taking the constant term along $Z \subseteq N_U$, one obtains that
\begin{align} 
\nonumber \theta_Z(g,h) &= \int_{[Z]}{\theta(zg,h)\,dz} \\ \label{eqn:Wsum} &= \theta_{V_0 \otimes Y}(g,h;\phi_{V_0}) \left(\sum_{w_1, w_2 \in W, \langle w_1, w_2 \rangle = 0}{\omega_{\psi, U^\vee \otimes W}(1,h)\phi_U(b_1 w_1 + b_2 w_2)}\right).
\end{align}
This follows from the fact that
\[
\omega_{\psi}(\exp(z b_{-1} \wedge b_{-2}))\phi(b_1 w_1 + b_2 w_2 +y_1 e_1 + y_2 e_2) = \psi(-z \langle w_1, w_2 \rangle) \phi(b_1 w_1 + b_2 w_2 +y_1 e_1 + y_2 e_2),
\]
as is immediately computed from \eqref{eqn:Paction}.

Now, the inner sum in \eqref{eqn:Wsum} can be written as 
\[
\theta^0_U(1,h;\phi_U) + \theta^1_U(1,h;\phi_U) + \theta^2_U(1,h;\phi_U),
\]
where $\theta^j_U(1,h;\phi_U)$ consists of the sum of the terms in \eqref{eqn:Wsum} for which $\dim_{\Q}{Span(w_1,w_2)} = j$.  Moreover, we have
\[
\theta^{2}_U(1,h;\phi_U) = \sum_{\gamma \in N_Y(\Q)\backslash \Sp(W)(\Q)}{\omega(1,\gamma h)\phi_U(b_1f_1 + b_2 f_2)}.
\]
Thus we have
\[
\theta_Z(g,h) = \theta_{V_0 \otimes Y}(g,h)(\theta^0_U(1,h;\phi_U) + \theta^1_U(1,h;\phi_U)) + \theta_{V_0 \otimes Y}(g,h) \theta^2_U(1,h;\phi_U).
\]

Consider an element $x_1 b_{-1} + x_2 b_{-2} \in V_0 \otimes U^\vee$.  Set
\[
z:=(b_1 w_1 + b_2 w_2 + y_1 e_1 + y_2 e_2) \exp(x_1 b_{-1} + x_2 b_{-2})
\]
Then
\begin{lemma}\label{lem:w1w2} Suppose $w_1, w_2 \in Y$.  Then
\[
\frac{1}{2} \langle pr_{\X_U}(z), pr_{\Y_U}(z) \rangle = (x_1, y_1) \langle w_1, e_1 \rangle + (x_1,y_2) \langle w_1, e_2 \rangle + (x_2, y_1) \langle w_2, e_1 \rangle + (x_2, y_2) \langle w_2, e_2 \rangle.
\]
\end{lemma}
\begin{proof} We have 
\begin{align*}
z &= (b_1 -x_1 - \frac{1}{2}((x_1,x_1) b_{-1} + (x_2,x_1) b_{-2})) w_1  + (b_2 - x_2 - \frac{1}{2}((x_1,x_2) b_{-1} + (x_2,x_2) b_{-2})) w_2 \\ &\;\; + (y_1 + (x_1,y_1)b_{-1} + (x_2,y_1)b_{-2}) e_1 + (y_2 + (x_1,y_2)b_{-1} + (x_2,y_2)b_{-2})e_2.
\end{align*}
Thus if $w_1, w_2 \in Y$,
\[
pr_{\X_U}(z) = b_1 w_1 + b_2 w_2 + y_1 e_1 + y_2 e_2
\]
and
\begin{align*} 
pr_{\Y_U}(z) &= -(x_1+ \frac{1}{2}((x_1,x_1) b_{-1} + (x_2,x_1) b_{-2})) w_1 -(x_2 + \frac{1}{2}((x_1,x_2) b_{-1} + (x_2,x_2) b_{-2})) w_2 \\ &\;\; + ((x_1,y_1)b_{-1} + (x_2,y_1)b_{-2}) e_1 + ((x_1,y_2)b_{-1} + (x_2,y_2)b_{-2})e_2.
\end{align*}
The lemma follows easily.
\end{proof}

From the fact that the pair $v_1, v_2$ is non-degenerate, applying Lemma \ref{lem:w1w2} one sees that the terms with $\theta_U^0$ and $\theta_U^1$ vanish upon taking the $\chi_{v_1,v_2}$-Fourier coefficient.  Thus we obtain
\[
\theta(f;\phi)_{\chi_{-v_1,-v_2}}(g) = \int_{N_Y(\Q)\backslash \Sp(W)(\A)}{\omega_{\psi}(g,h)\phi(z_{v_1,v_2}) f(h)\,dh}.
\]
Let $s = \mm{s_1}{s_2}{s_2}{s_3}$.  Then 
\[
 z_{v_1,v_2} n_Y(s) = b_1 f_1 + b_2 f_2 + v_1(e_1 + s_1 f_1 + s_2 f_2) + v_2(e_2 + s_2 f_1 + s_3 f_2)
\]
and thus
\[
\frac{1}{2}\langle pr_{\X_U}(z_{v_1,v_2}n_Y(s) ), pr_{\Y_U}( z_{v_1,v_2}n_Y(s))\rangle = \tr(S(v_1,v_2) s).
\]
The statement of the proposition now follows in the restricted setting $\phi = \phi_U \otimes \phi_{V_0}$ and $g \in \SO(V_0)$.  

For the general case, one reduces to the above special case.  Indeed, we have proved \eqref{eqn:Tchi} when $g=1$ (or more generally, $g \in \SO(V_0)$) and $\phi = \phi_U \otimes \phi_{V_0}$.  By linearity, the proposition follows for $g=1$ and all $\phi$. Finally, defining $\phi' = \omega_{\psi,\Y_U}(g,1)\phi$, \eqref{eqn:Tchi} for $\phi'$ and $g=1$ gives \eqref{eqn:Tchi} for $\phi$ and $g$.  This completes the proof of the proposition.
\end{proof}

Let us now consider the integral \eqref{eqn:Tchi} when $\phi = F(\phi')$ for $\phi' \in S(\X(\A))$.  More precisely, in section \ref{sec:Arch}, we will need an expression for $\omega_{\psi, \Y_U}(1,h)F(\phi')(z_{v_1,v_2})$ when $h \in M_Y(\A)$.  We compute this now. For $h \in M_Y$, we abuse notation and let $\det(h)$ denote the determinant of $h$ acting on $X$. 
\begin{lemma}\label{lem:PFm} Suppose $\phi' \in S(\X(\A))$, $\phi = F(\phi')$ is in $S(\X_{U}(\A))$ and $h \in M_Y(\A)$.  Then
\begin{align*}
\omega_{\psi,\Y_U}(1,h)&\phi(z_{v_1,v_2}) =\\ & |\det(h)|^{(\dim(V_0)+4)/2}\int_{X^2}\psi(\langle z_1, f_1 \rangle + \langle z_2, f_2 \rangle)\phi'((v_1e_1 + v_2e_2 + b_{-1}z_1 + b_{-2}z_2)h)\,dz_1\,dz_2.
\end{align*}
\end{lemma}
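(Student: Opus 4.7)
The plan is to combine the $P_U$-action formula \eqref{PXU} with the definition of the partial Fourier transform $F$, together with a change of variables that exploits $h \in \Sp(W)$. Since $h \in M_Y$ preserves both $X$ and $Y$ and acts trivially on $V$, the element $(1,h)$ stabilizes $\Y_U = U^\vee \otimes W \oplus V_0 \otimes Y$, so it lies in $P_U \cap (\SO(V) \times \Sp(W))$. I first compute $\det_{P_U}(h)$ on $\W/\Y_U \simeq \X_U = U\otimes W \oplus V_0 \otimes X$: on the first summand the action is $\mathrm{id}_U \otimes h|_W$ with determinant $\det(h|_W)^{\dim U}=1$ (since $h$ is symplectic), and on the second it is $\mathrm{id}_{V_0} \otimes h|_X$ with determinant $\det(h)^{\dim V_0}$. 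Hence $|\det_{P_U}(h)|^{1/2} = |\det(h)|^{\dim(V_0)/2}$.

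Next, I check that $z_{v_1,v_2}\cdot h$ lies entirely inside $\X_U$: the terms $v_i(e_i h)$ land in $V_0 \otimes X$ and the terms $b_i(f_i h)$ land in $U\otimes Y$, both contained in $\X_U$. Thus $p_{\Y_U}(z_{v_1,v_2} h)=0$, the quadratic $\psi$-factor in \eqref{PXU} is trivial, and
\[
\omega_{\psi,\Y_U}(1,h)\phi(z_{v_1,v_2}) \;=\; |\det(h)|^{\dim(V_0)/2}\,\phi(z_{v_1,v_2} h).
\]
Now I unfold $\phi = F(\phi')$. Writing a general $z' \in U^\vee \otimes X$ as $b_{-1}u_1 + b_{-2}u_2$ with $u_i \in X$, the definition of $F$ together with $p_{U\otimes X}(z_{v_1,v_2} h)=0$ gives
\[
\phi(z_{v_1,v_2} h) = \int_{X^2} \psi\!\left(\langle u_1, f_1 h\rangle + \langle u_2, f_2 h\rangle\right)\,\phi'\!\left((v_1 e_1 + v_2 e_2)h + b_{-1}u_1 + b_{-2}u_2\right)\,du_1\,du_2,
\]
since $(v_1 e_1 + v_2 e_2) h$ is the $V_0 \otimes X$-component of $z_{v_1,v_2} h$, and $b_1(f_1 h)+b_2(f_2 h)$ is the $U\otimes Y$-component against which $z'$ is paired.

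The last step is the substitution $u_i = z_i h$ with $z_i \in X$: this has Jacobian $|\det(h)|^2$, and because $h$ preserves the symplectic form on $W$ one has $\langle z_i h, f_i h\rangle = \langle z_i, f_i\rangle$. The argument of $\phi'$ becomes $(v_1 e_1 + v_2 e_2 + b_{-1}z_1 + b_{-2}z_2)h$, and combining the Jacobian with the prefactor gives the exponent $\dim(V_0)/2 + 2 = (\dim(V_0)+4)/2$, exactly as asserted. The only real obstacle is bookkeeping — keeping straight the right actions on $V$ and $W$, and verifying that $\det_{P_U}(h)$ together with the change-of-variables Jacobian assemble to the precise power of $|\det(h)|$ stated in the lemma.
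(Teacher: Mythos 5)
Your proof is correct and follows essentially the same route as the paper: apply the $P_U$-action formula \eqref{PXU} to $(1,h)$ (noting $p_{\Y_U}(z_{v_1,v_2}h)=0$ so the quadratic character is trivial), unfold the partial Fourier transform over $U^\vee\otimes X$, and substitute $z_i\mapsto z_ih$ to pick up the Jacobian $|\det(h)|^2$. The only difference is that you spell out the computation of $\det_{P_U}(h)$ and the membership $(1,h)\in P_U$, which the paper leaves implicit; the bookkeeping all checks out.
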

\begin{proof} We have 
\[
\omega_{\psi,\Y_U}(1,h)\phi(z_{v_1,v_2}) = |\det(h)|^{\dim V_0/2}\phi( b_1(f_1 h) + b_2(f_2 h) + v_1 (e_1h) + v_2(e_2 h)).
\]
Thus
\begin{align*}
\omega_{\psi,\Y_U}(1,h)\phi(z_{v_1,v_2}) &= |\det(h)|^{\dim V_0/2}\int_{U^\vee \otimes X}\psi(\langle z', b_1(f_1h) + b_2(f_2h) \rangle)\phi'(v_1(e_1h) + v_2(e_2h) + z')\,dz' \\ &= |\det(h)|^{\dim V_0/2}\int_{X^2}\psi(\langle b_{-1}z_1 + b_{-2}z_2, b_1(f_1h) + b_2(f_2h) \rangle) \\ &\;\;\;\;\;\;\;\;\; \times \phi'(v_1(e_1h) + v_2(e_2h) + b_{-1}z_1+b_{-2}z_2)\,dz_1\,dz_2 \\ &= |\det(h)|^{(\dim(V_0)+4)/2}\int_{X^2}\psi(\langle z_1, f_1 \rangle + \langle z_2, f_2 \rangle) \\ &\;\;\;\; \times \phi'((v_1e_1 + v_2e_2 + b_{-1}z_1 + b_{-2}z_2)h)\,dz_1\,dz_2.
\end{align*}
In the last line we have made the variable change $z_j \mapsto z_j h$ for $j = 1,2$. This gives the lemma.\end{proof}

\subsection{Theta lift of Poincare series}\label{subsec:PS1}
Finally, we discuss the theta lift of certain Poincare series, in an abstract, formal setting.  Suppose we have a non-degenerate symmetric $2 \times 2$ rational matrix $T$, and $\chi_T$ denotes the associated character of $N_Y(\Q)\backslash N_Y(\A)$.  Suppose that $\mu_T: \Sp(W)(\A) \rightarrow \C$ is a function satisfying $\mu_T(n_Y(s)h) = \psi((T,s))\mu_T(h)$ for all $h \in \Sp(W)(\A)$.  The Poincare series associated to $\mu_T$ is the automorphic function
\[P(h;\mu_T) = \sum_{\gamma \in N_Y(\Q)\backslash \Sp(W)(\Q)}{\mu_T(\gamma h)},\]
if the sum converges absolutely. 

In section \ref{sec:Arch}, we will prove that for a particular special choice of archimedean test data $\phi_\infty \in S(\X(\R))$, the theta lift of holomorphic Siegel modular forms is a quaternionic modular form on $\SO(V)$.  The proof follows the method of Oda \cite{oda} and Niwa \cite{niwa}, whereby one proves that the lifts of certain Poincare series on $\Sp(W)$ are quaternionic on $\SO(V)$, and deduces the general case from the fact that the Poincare series span the cuspidal Siegel modular forms \cite[Chapter 3]{klingen}.  We now write out the formal calculation.

\begin{lemma} Suppose the sum defining $P(h,\mu_T)$ converges absolutely to a cuspidal automorphic form on $\Sp(W)(\A)$. Let $\phi \in S(\X(\A))$ and suppose moreover that
\begin{equation} \label{eqn:finite}
\sum_{y_1, y_2 \in V}\int_{N_Y(\Q)\backslash \Sp(W)(\A)}{|\mu_T(h)||(\omega_{\psi}(g,h)\phi)(y_1 e_1 + y_2 e_2)|\,dh}
\end{equation}
is finite.  Then 
\[
\theta(P(\cdot;\mu_T);\phi)(g) = \sum_{y_1, y_2 \in V, S(y_1,y_2) = -T} \left(\int_{N_Y(\A)\backslash \Sp(W)(\A)}{\mu_T(h)(\omega_{\psi,\Y}(g,h)\phi)(y_1 e_1 + y_2 e_2)\,dh}\right).
\]
\end{lemma}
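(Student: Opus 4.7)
The plan is to carry out the standard Piatetski-Shapiro--Rallis unfolding argument.

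First, substitute $P(h;\mu_T)$ into the defining integral
\[
\theta(P(\cdot;\mu_T);\phi)(g) = \int_{\Sp(W)(\Q)\backslash \Sp(W)(\A)}{P(h;\mu_T)\,\theta(g,h;\phi)\,dh}
\]
and unfold the sum over $N_Y(\Q)\backslash \Sp(W)(\Q)$ against the quotient. Absolute convergence of the Poincare series, together with the cuspidality assumption, legitimizes this interchange, producing
\[
\int_{N_Y(\Q)\backslash \Sp(W)(\A)}{\mu_T(h)\,\theta(g,h;\phi)\,dh}.
\]

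Second, expand the theta kernel as $\theta(g,h;\phi) = \sum_{y_1,y_2\in V}{(\omega_\psi(g,h)\phi)(y_1 e_1 + y_2 e_2)}$, and write the integration over $N_Y(\Q)\backslash \Sp(W)(\A)$ as iterated integration over $[N_Y] := N_Y(\Q)\backslash N_Y(\A)$ followed by $N_Y(\A)\backslash \Sp(W)(\A)$. Hypothesis \eqref{eqn:finite} justifies pulling the sum over $(y_1,y_2)\in V^2$ outside both integrals by Fubini. On the inner $[N_Y]$-integral both factors of the integrand transform by explicit characters of $s$: the hypothesis on $\mu_T$ gives $\mu_T(n_Y(s)h) = \psi((T,s))\mu_T(h)$, while \eqref{eqn:Paction} applied with $\alpha=1$ and $\beta=s$, combined with the identity $\tfrac{1}{2}\langle x, xs\rangle = \tr(S(y_1,y_2)s)$ for $x = y_1 e_1 + y_2 e_2$ (essentially the computation performed near the end of the proof of Proposition~\ref{prop:FC1}), yields
\[
(\omega_\psi(g,n_Y(s)h)\phi)(y_1 e_1 + y_2 e_2) = \psi(\tr(S(y_1,y_2)s))\,(\omega_\psi(g,h)\phi)(y_1 e_1 + y_2 e_2),
\]
using that $\chi_V\equiv 1$ since $V$ has trivial discriminant.

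Finally, the inner $[N_Y]$-integral of $\psi(\tr((T + S(y_1,y_2))s))$ evaluates to $1$ when $S(y_1,y_2) = -T$ and to $0$ otherwise (normalizing the Haar measure on $[N_Y]$ to have total mass one). Only pairs $(y_1,y_2) \in V^2$ satisfying $S(y_1,y_2) = -T$ survive, and what remains is exactly the stated right-hand side. The only delicate point is the Fubini and unfolding bookkeeping, but hypothesis \eqref{eqn:finite} is formulated precisely to license those interchanges, so I anticipate no serious obstacle.
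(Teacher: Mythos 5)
Your proposal is correct and follows essentially the same route as the paper: unfold the Poincar\'e series to an integral over $N_Y(\Q)\backslash \Sp(W)(\A)$, expand the theta kernel, use the character transformation $\omega_{\psi,\Y}(1,n_Y(s))\phi_0(y_1e_1+y_2e_2)=\psi((S(y_1,y_2),s))\phi_0(y_1e_1+y_2e_2)$, and let the $[N_Y]$-integral pick out the pairs with $S(y_1,y_2)=-T$, with hypothesis \eqref{eqn:finite} justifying the interchanges. No issues.
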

\begin{proof} Because $P(h;\mu_T)$ is assumed cuspidal, the integral defining the theta lift converges absolutely.  Computing formally,
\begin{align*}
\theta(P(\cdot;\mu_T);\phi)(g) &= \int_{[Sp(W)]}{\theta(g,h;\phi)P(h;\mu_T)\,dh} \\ &= \int_{N_Y(\Q)\backslash \Sp(W)(\A)}{\theta(g,h;\phi)\mu_T(h)\,dh} \\ &= \int_{N_Y(\A)\backslash \Sp(W)(\A)}{\mu_T(h)\left(\int_{[N_Y]}{\psi((T,s))\theta(g,n_Y(s)h;\phi)\,ds}\right)\,dh}.
\end{align*}
The finiteness assumption of the lemma proves that the above formal manipulations are justified.  Now
\[
\theta(g,h;\phi) = \sum_{y_1, y_2 \in V}{\omega_{\psi,\Y}(g,h)\phi(y_1 e_1 + y_2 e_2)}
\]
and $\omega_{\psi,\Y}(1,n_Y(s))\phi_0(y_1 e_1 + y_2 e_2) = \psi(S(y_1,y_2),s)\phi_0(y_1e_1 + y_2 e_2)$ for any $\phi_0 \in S(\X)$.  One obtains
\[
\int_{[N_Y]}{\psi((T,s))\theta(g,n_Y(s)h;\phi)\,ds} = \sum_{y_1, y_2 \in V, S(y_1,y_2) = -T}{\omega_{\psi,\Y}(g,h)\phi(y_1 e_1 + y_2 e_2)}.
\]
The lemma follows.
\end{proof}

\section{Special archimedean test data}\label{sec:Arch} This section is almost entirely archimedean.  We define the special test data in $S(\X(\R))$ and prove that the theta lift of weight $N:=w+2-n/2$ Siegel modular forms on $\Sp(W)$ are weight $w$ modular forms on $\SO(V)$, in the sense of \cite{pollackQDS}.  We also prove a certain archimedean result that is crucial to deducing the algebraicity of the Fourier coefficients of the lift $\theta(f;\phi)$ on $\SO(V)$.  Finally, we define the Poincare series $P(h;\mu_T)$ that we will use, and prove a bound that is required to justify the finiteness of \eqref{eqn:finite}.

Denote by $V_2 = \C^2$ the defining representation of $\SU(2)$.  Throughout the section, $K_V$ denotes the subgroup of $\SO(V)(\R)$ that also fixes the majorant $||\cdot ||^2$.  This is a maximal compact subgroup.  Combining the definitions and normalizations of \cite{pollackQDS} Appendix A and Section 5.1, we obtain a map 
\[p_K: \wedge^2 V \otimes\C \rightarrow \mathfrak{su}_2 \otimes \C \simeq \sl_2(\C) \simeq Sym^2(V_2).\]
Our fixed long-root $\SU(2) \subseteq K_V$ is defined to be the subgroup of $K_V$ with Lie algebra the image $p_K(\wedge^2 V \otimes \R)$.  The map $p_K$ also induces a map $K_V \rightarrow \SU(2)/\mu_2$.  Set $\Vm_w = Sym^{2w}V_2$, which we think of as a representation of $K_V$ via this map.  \emph{Modular forms} on $\SO(V)$ and $G_2$ of weight $w$ are certain $\Vm_w$-valued functions on $\SO(V)(\A)$ and $G_2(\A)$. 

\subsection{Test data} In this subsection we define a certain $\Vm_w$-valued Schwartz function $\phi_\infty$ on $\X(\R)$, and prove various properties of it that are crucial to what follows.  Specially, for $y_1, y_2 \in V$ we set
\[\phi_\infty(y_1,y_2) := \phi_\infty(y_1e_1 + y_2 e_2) = p_K(y_1 \wedge y_2)^w e^{-2\pi (||y_1||^2 + ||y_2||^2)}.\]
Note that $p_K(y_1 \wedge y_2) \in \Vm_1$, and we consider $p_K(y_1 \wedge y_2)^w$ as an element of $\Vm_w$ via the $K_V$-equivariant map $Sym^w \Vm_1 \rightarrow \Vm_w$. Note also that there is a $2\pi$ in the exponential factor--as opposed to just a $\pi$--because we have a factor of $\frac{1}{2}$ in our definition of $||y||^2$.  Immediately from the definition, one has 
\[
\phi_\infty((y_1e_1 + y_2 e_2)gk) = k^{-1} \phi_\infty((y_1 e_1 + y_2 e_2)g)
\]
for $g \in \SO(V)(\R)$ and $k \in K_V$.

Denote by $V_{+}$ is the four-dimensional subspace of $V$ where $\iota$ acts by $1$ and $V_{-}$ is the $n$-dimensional subspace of $V$ where $\iota$ acts by $-1$.  For $y \in V = V_+ \oplus V_{-}$, write $y = y_+ + y_{-}$, where $y_+ \in V_{+}$ and $y_- \in V_{-}$.  Then we have
\begin{equation}\label{eqn:pm}
\phi_\infty(y_1,y_2) = p_K(y_{1,+}\wedge y_{2,+})^w e^{-2\pi(||y_{1,+}||^2 + ||y_{2,+}||^2)} e^{-2\pi(||y_{1,-}||^2 + ||y_{2,-}||^2)}.
\end{equation}
  
Recall the notion of a \emph{pluriharmonic} function from \cite[page 4]{kV}.
\begin{lemma}\label{lem:pluri} The $\Vm_w$-valued polynomial $x_1 e_1 + x_2 e_2 \mapsto p_K(x_1 \wedge x_2)^{w}$ on $V_{+} \otimes X$ is pluriharmonic.\end{lemma}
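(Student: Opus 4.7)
The plan is to verify the Kashiwara-Vergne pluriharmonicity condition of \cite{kV}:
\[
\Delta_{ij} P^w := \sum_k \partial_{v_k,i}\partial_{v_k,j} P^w = 0 \quad (1 \le i,j \le 2),
\]
for $P := p_K(x_1\wedge x_2)$, where $\{v_k\}$ is an orthonormal basis of $V_+$ and $\partial_{v_k,i}$ differentiates in the direction $v_k$ of the $i$-th factor of $V_+\otimes X \simeq V_+\oplus V_+$.

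First I would reduce to an algebraic identity on $V_+$. Since $P$ is bilinear in $(x_1,x_2)$ and $p_K(v\wedge v)=0$, the second derivative $\partial_{v_k,i}\partial_{v_k,j} P$ vanishes for every $k$ (linearity when $i=j$; antisymmetry of the wedge when $i\ne j$). The chain rule therefore yields
\[
\Delta_{ij}P^w = w(w-1)P^{w-2}\sum_k(\partial_{v_k,i}P)(\partial_{v_k,j}P),
\]
with the product taken in $\Vm_1\cdot\Vm_1\subseteq\Vm_2=\mathrm{Sym}^4 V_2$ (polynomial multiplication in the $V_2$-variables). Using $\partial_{v_k,1}P = p_K(v_k\wedge x_2)$ and $\partial_{v_k,2}P = -p_K(v_k\wedge x_1)$, it suffices to prove
\[
\sum_k p_K(v_k\wedge y)\cdot p_K(v_k\wedge z) = 0 \quad \text{in } \Vm_2
\]
for all $y,z\in V_+$.

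Next I would exploit that $V_+$ is four-dimensional positive definite, so that $\Spin(V_+)\simeq\SU(2)\times\SU(2)$ and, complexifying, $V_+\otimes\C\simeq V_2\boxtimes W_2$, where $V_2, W_2$ are the standard representations of the two $\SU(2)$-factors; under this identification, the complexified bilinear form on $V_+$ corresponds to the tensor product $\omega_V\otimes\omega_W$ of the two invariant symplectic forms. Then
\[
\wedge^2(V_+\otimes\C)\simeq(\mathrm{Sym}^2 V_2)\oplus(\mathrm{Sym}^2 W_2),
\]
the two summands being $\wedge^2_\pm$, and by the definition of the long-root $\SU(2)$ the map $p_K$ is (up to a nonzero scalar which I absorb) the projection onto the $\mathrm{Sym}^2 V_2$-summand. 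Concretely,
\[
p_K\bigl((a\otimes b)\wedge(a'\otimes b')\bigr) \propto \omega_W(b,b')\,(a\cdot a').
\]
Consequently, for fixed $y$ the map $v\mapsto p_K(v\wedge y)$ factors through $V_2\otimes V_2$ as $a\otimes b\mapsto a\otimes\phi_y(b)$ for a linear map $\phi_y:W_2\to V_2$ determined by $y$.

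Finally, I would rewrite the sum as a coordinate-free contraction against the inverse Gram tensor $g^{-1}=\omega_V^{-1}\otimes\omega_W^{-1}\in(V_2\boxtimes W_2)^{\otimes 2}$. With the four $V_2$-slots of $V_2^{\otimes 4}$ labelled $(a_k,\phi_y(b_k),a_l,\phi_z(b_l))$, the sum equals, up to a nonzero constant,
\[
(\omega_V^{-1})_{1,3}\,\otimes\,\bigl((\phi_y\otimes\phi_z)(\omega_W^{-1})\bigr)_{2,4}\in V_2^{\otimes 4}.
\]
Because $\omega_V^{-1}\in\wedge^2 V_2$, this tensor is antisymmetric in slots $1$ and $3$, and hence is annihilated by the full symmetrization $V_2^{\otimes 4}\to\mathrm{Sym}^4 V_2=\Vm_2$. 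The main obstacle is the middle paragraph: pinning down that $p_K$, with the normalizations of Appendix A and Section 5.1 of \cite{pollackQDS}, is genuinely (a constant multiple of) the projection onto the $\mathrm{Sym}^2 V_2\boxtimes\wedge^2 W_2$-summand and fixing the corresponding identification of $\Vm_1$ with $\mathrm{Sym}^2 V_2$. Once that identification is nailed down, the remainder is a short tensor-algebra manipulation.
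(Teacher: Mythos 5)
Your proof is correct, but it takes a genuinely different route from the paper. The paper's argument is a one-line representation-theoretic one: the polynomial has degree $2w$, and $2w$ is the minimal degree in which $\Vm_w=\mathrm{Sym}^{2w}V_2$ can appear in $\mathrm{Sym}^k(X\otimes V_+)$ (since $V_+\otimes\C\simeq V_2\boxtimes W_2$ contributes at most one unit of highest weight per degree), so pluriharmonicity follows from \cite[Corollary 5.4]{kV}, which says the lowest-degree occurrence of an isotype is automatically harmonic. You instead verify the defining equations $\Delta_{ij}P^w=0$ directly: the reduction to $\sum_k p_K(v_k\wedge y)\cdot p_K(v_k\wedge z)=0$ is right (the pure second derivatives of the bilinear $P$ vanish), and the contraction argument closes it, since $\omega_V^{-1}\in\wedge^2V_2$ forces antisymmetry in two of the four $V_2$-slots, which dies under the symmetrization into $\mathrm{Sym}^4V_2$. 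The one point you flag as an obstacle --- that $p_K|_{\wedge^2 V_+\otimes\C}$ is a nonzero scalar multiple of the projection onto the $\mathrm{Sym}^2V_2\otimes\wedge^2W_2$ summand --- is not really in doubt: $p_K$ is $K_V$-equivariant, hence equivariant for the $\SU(2)\times\SU(2)$ covering $\SO(V_+)$, and by Schur any nonzero equivariant map $\wedge^2V_+\otimes\C\to\mathrm{Sym}^2V_2$ is that projection up to scale (nonvanishing being guaranteed by the identity $2\|p_K(y_1\wedge y_2)\|^2=\det(S(y_1,y_2)+R(y_1,y_2))$ proved later in the same section). What the two approaches buy: the paper's is shorter and generalizes with no extra work to any situation where a degree count applies, while yours is self-contained (no appeal to \cite[Corollary 5.4]{kV}) and makes explicit exactly which feature of $p_K$ --- its factoring through one $\mathfrak{su}_2$-factor of $\so(4)$ --- is responsible for the harmonicity.
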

We remark that if $m \in \GL_2(\C)$, one has 
\begin{equation}\label{eqn:pmwdet}
x_1(e_1m) + x_2(e_2m) \mapsto \det(m)^w p_K(x_1 \wedge x_2)^{w}.
\end{equation}
\begin{proof} Note that the polynomial $x_1 e_1 + x_2 e_2 \mapsto p_K(x_1 \wedge x_2)^{w}$ is of degree $2w$.  As $2w$ is the smallest integer $k$ for which $\Vm_w$ can occur in $Sym^k(X \otimes V_{+})$, the pluriharmonicity follows from \cite[Corollary 5.4]{kV}. \end{proof}

Let $K_W \subseteq \Sp(W)(\R)$ be the maximal compact subgroup that fixes the inner product on $W$ for which $e_1, e_2, f_1, f_2$ is an orthonormal basis.  Then $K_W \simeq U(2)$ via the map $\mm{A}{B}{-B}{A} \mapsto A+iB$.  Denote by $\H_2$ the Siegel upper half-space of degree two, and write $j: \Sp(W)(\R) \times \H_2 \rightarrow \C$ for the usual factor of automorphy $j(g,Z) = \det(cZ+d)$ for $g = \mm{a}{b}{c}{d}$.  Then if $k \in K_W \simeq U(2)$, the representation $\det(\cdot)^N$ of $U(2)$ is realized as $k \mapsto j(k,i)^{-N}$.

\begin{corollary}\label{cor:Kequiv} For $k \in K_W$, one has $\omega_{\psi,\Y}(1,k)\phi_\infty(y_1,y_2) = j(k,i)^{-(w+2-n/2)} \phi_\infty(y_1,y_2)$.\end{corollary}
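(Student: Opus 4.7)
The plan is to separate contributions from the positive-definite and negative-definite parts of $V\otimes\R$ and apply the standard transformation law for Gaussians times pluriharmonic polynomials under the Weil representation.

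First I factorize. The splitting $V\otimes\R = V_+ \oplus V_-$ is orthogonal for $q$, with $V_+$ positive-definite of dimension $4$ and $V_-$ negative-definite of dimension $n$. Correspondingly, $\omega_{\psi,\Y}|_{\Sp(W)}$ factors as a tensor product $\omega_+ \otimes \omega_-$ of the Weil representations attached to $(V_+,q|_{V_+})$ and $(V_-,q|_{V_-})$, acting on $S(V_+\otimes X) \otimes S(V_-\otimes X)$. By \eqref{eqn:pm}, $\phi_\infty = \phi_+ \otimes \phi_-$, where $\phi_+$ is the pluriharmonic polynomial $p_K(y_{1,+}\wedge y_{2,+})^w$ times the standard positive-definite Gaussian on $V_+^2$, and $\phi_-$ is the standard Gaussian on $V_-^2$.

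For the positive-definite factor, Lemma \ref{lem:pluri} says the polynomial factor is pluriharmonic, and \eqref{eqn:pmwdet} shows it transforms under $m \in K_W \simeq U(2)\subseteq \GL_2(\C)$ by $\det(m)^w = j(m,i)^{-w}$ (using $\det(k) = \overline{j(k,i)} = j(k,i)^{-1}$ on $K_W$). Combined with the standard Kashiwara-Vergne character $j(k,i)^{-\dim V_+/2} = j(k,i)^{-2}$ for the Gaussian on a $4$-dimensional positive-definite space with two copies, this gives $\omega_+(k)\phi_+ = j(k,i)^{-(w+2)}\phi_+$. For the negative-definite factor, $\omega_{\psi,(V_-,q|_{V_-})} = \omega_{\bar\psi,(V_-,-q|_{V_-})}$ with $-q|_{V_-}$ positive-definite, so the $K_W$-character on $\phi_-$ is the complex conjugate of the positive-definite case: $\omega_-(k)\phi_- = j(k,i)^{+n/2}\phi_-$. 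Multiplying the two,
\[
\omega_{\psi,\Y}(1,k)\phi_\infty = j(k,i)^{-(w+2-n/2)}\phi_\infty = j(k,i)^{-N}\phi_\infty,
\]
as claimed.

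The main obstacle is aligning sign and normalization conventions: the $2\pi$ in the exponential (versus the more common $\pi$), the identification of $K_W$ with $U(2)$ via $\mb{A}{B}{-B}{A}\mapsto A+iB$, and the relation $\det(k) = \overline{j(k,i)}$ for $k \in K_W$. A useful sanity check is to specialize to $w=0$: then $\phi_\infty$ is purely Gaussian, and one expects the character $j(k,i)^{-(2-n/2)} = j(k,i)^{(n-4)/2}$ of a scalar-weight theta function attached to a lattice of signature $(4,n)$, which is exactly what the formula gives.
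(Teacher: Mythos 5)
Your proof is correct and follows essentially the same route as the paper, which simply cites \eqref{eqn:pm}, \eqref{eqn:pmwdet}, Lemma \ref{lem:pluri}, and Lion--Vergne for the standard fact that a pluriharmonic polynomial of $\GL_2(\C)$-type $\det^w$ times the Gaussian transforms under $K_W$ with weight $\dim V_+/2 + w = 2+w$ on the positive-definite factor and weight $-n/2$ on the negative-definite one. Your version just makes the tensor factorization and the weight bookkeeping (including $\det(k)=j(k,i)^{-1}$ on $K_W$) explicit, which is a faithful expansion of the paper's one-line argument.
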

\begin{proof} This follows from \eqref{eqn:pm}, \eqref{eqn:pmwdet} and the pluriharmoncity of Lemma \ref{lem:pluri}.  See, e.g., \cite[section 2.5]{lionVergne}. \end{proof}

\subsection{Poincare series}\label{subsec:Poincare} We now define the Poincare series.  Classically, the holomorphic Poincare series of weight $N$ (of exponential type) are defined as 
\[
P_N(Z;T) = \sum_{\gamma \in \Gamma_\infty\backslash \Gamma}{j(\gamma,Z)^{-N} e^{2\pi i (T,\gamma Z)}}.
\]
Here $T$ is a fixed positive-definite half-integral symmetric matrix, $\Gamma = \Sp_4(\Z)$, $\Gamma_\infty = \{\gamma = \mm{1}{V}{0}{1} \in \Gamma\}$ and $N >>0$.  The sum converges absolutely to a \emph{cuspidal} holomorphic Siegel modular form of weight $N$.  See \cite[Chapter 3]{klingen}.  

Adellically, one proceeds as follows.  Fix $T$ a positive definite two-by-two real symmetric matrix and an integer $N >>0$. Define $\mu_{T,\infty}^+$ on $\Sp(W)(\R)$ as $\mu_{T,\infty}^+(h) = j(h,i)^{-N} e^{2\pi i (T, h \cdot i)}$.  For $p$ a finite prime, define $\mu_{T,p}^+$ on $\Sp(W)(\Q_p)$ as any function supported on $N_Y(\Q_p)\Sp(W)(\Z_p)$ satisfying $\mu_{T,p}^+(n_Y(s)h) = \psi_p((T,s))\mu_{T,p}^+(h)$ for all $h \in \Sp(W)(\Q_p)$ and all $n_Y(s) \in N_Y(\Q_p)$.  Assume moreover that at all but finitely many primes, $\mu_{T,p}^+$ is the function $\mu_{T,p}^+(n_Y(s)k) = \psi_p((T,s))$.  With these assumptions, we define $\mu_{T}^+$ on $\Sp(W)(\A)$ as $\mu_{T}^+(h) = \prod_{v}{\mu_{T,v}^+(h_v)}$.  Then $\mu_{T}^+(n_Y(s) h) = \psi((T,s))\mu_T^+(h)$ for all $h \in \Sp(W)(\A)$ and $s \in N_Y(\A)$, and
\[
P(h;\mu_{T}^+) = \sum_{\gamma \in N_Y(\Q)\backslash \Sp(W)(\Q)}{\mu_T^+(\gamma h)}
\]
is a Poincare series that corresponds to a cuspidal holomorpic Siegel modular form on $\Sp(W)$ of weight $N$.  We now set $\mu_{-T}(h) = \overline{\mu_{T}^{+}(h)}$ and $P(h;\mu_{-T}) = \sum_{\gamma \in N_Y(\Q)\backslash \Sp(W)(\Q)}{\mu_{-T}(\gamma h)}$, as in subsection \ref{subsec:PS1}.  Then $\mu_{-T}(n_Y(s)h) = \psi(-(T,s))\mu_{-T}(h)$ and $P(h;\mu_{-T}) = \overline{P(h;\mu_{T}^{+})}$ is the complex conjugate of a holomorphic Siegel modular form on weight $N$.  We will compute the $\theta$-lift of $P(h;\mu_{-T})$.

To do this, we compute the following integral.  For $X \in \wedge^2 V \simeq \so(V)$ such that $p_K(X) \neq 0$, define
\[
A_w(X) = \frac{p_K(X)^w}{||p_K(X)||^{2w+1}}.
\]
Here $|| \cdot ||$ is the $K_V$-invariant norm on $\so(V)$ induced from the Cartan involution $\theta_\iota$, where $\theta_\iota(y_1 \wedge y_2) = \iota(y_1) \wedge \iota(y_2)$.  The function $A_w$ will play an important role in what follows.  See also \cite[Section 6]{pollackG2}, \cite[section 2.1]{pollackE8} where the function $A_w$ appears in the construction of degenerate Heisenberg Eisenstein series.

\begin{proposition}\label{prop:Awint} Suppose $N= w+2-n/2$ is the weight of the Poincare series and $y_1, y_2 \in V$ with $S(y_1,y_2) = T > 0$.  Then $p_K(y_1 \wedge y_2) \neq 0$ and
\[
I_\infty(y_1,y_2;T):=\int_{N_Y(\R)\backslash \Sp(W)(\R)}{\mu_{-T}(h)\omega_{\psi,\Y}(1,h)\phi_\infty(y_1 e_1 + y_2 e_2)\,dh}\]
is equal to $A_w(y_1 \wedge y_2)$ up to a nonzero constant which is independent of $y_1, y_2$.\end{proposition}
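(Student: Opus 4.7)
The plan is to use the Iwasawa decomposition $\Sp(W)(\R) = N_Y(\R) M_Y(\R)^+ K_W$ and compute the resulting integrals over $M_Y(\R)^+ \cong \GL_2(\R)^+$ and $K_W$ explicitly. The $K_W$-piece collapses to a constant: by Corollary \ref{cor:Kequiv}, $\omega_{\psi,\Y}(1,k)\phi_\infty = j(k,i)^{-N}\phi_\infty$, while $\mu_{-T}(mk) = \overline{j(k,i)}^{-N}\mu_{-T}(m)$ since $K_W$ fixes $i \in \H_2$. Since $|j(k,i)| = 1$ for $k \in K_W \cong \U(2)$ (from $k = \mm{A}{B}{-B}{A}$ and $j(k,i) = \overline{\det(A+iB)}$), the product is independent of $k$, so the $K_W$-integration yields only a volume factor.

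For the $M_Y(\R)^+$-piece, parametrize by $m_Y(\alpha)$, $\alpha \in \GL_2(\R)^+$. From equation \eqref{eqn:Paction} (with $\beta = 0$, and $\chi_V = 1$ because $\mathrm{disc}(V) = 1$) together with the pluriharmonicity identity \eqref{eqn:pmwdet}, a direct calculation gives
\[\omega_{\psi,\Y}(1,m_Y(\alpha))\phi_\infty(y_1,y_2) = |\det\alpha|^{\dim V/2}\det(\alpha)^{w}\, p_K(y_1\wedge y_2)^{w}\, e^{-2\pi\tr(M_y\alpha\alpha^T)},\]
where $M_y := \tfrac{1}{2}\bigl((y_i, \iota(y_j))\bigr)_{ij}$ is the majorant Gram matrix. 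On the other hand, $m_Y(\alpha)\cdot i = i\alpha\alpha^T$ and $j(m_Y(\alpha),i) = \det(\alpha)^{-1}$ give $\mu_{-T}(m_Y(\alpha)) = \det(\alpha)^{N}e^{-2\pi\tr(T\alpha\alpha^T)}$. Combining these, using $N = w+2-n/2$ and $\dim V = n+4$, and absorbing all the measure-theoretic factors (the modular character $\delta_{P_Y}(m_Y(\alpha)) = |\det\alpha|^{3}$ and the Haar weight $|\det\alpha|^{-2}$ on $\GL_2$), the integral collapses, up to a nonzero constant, to
\[p_K(y_1\wedge y_2)^{w}\int_{\GL_2(\R)^+}\det(\alpha)^{2w-1}e^{-2\pi\tr(S\alpha\alpha^T)}\,d\alpha,\qquad S := T + M_y.\]
Once one verifies $S > 0$, the substitution $\alpha = S^{-1/2}\beta$ is a bijection of $\GL_2(\R)^+$ onto itself with $\tr(S\alpha\alpha^T) = \tr(\beta\beta^T)$, $\det(\alpha)^{2w-1} = \det(S)^{-(2w-1)/2}\det(\beta)^{2w-1}$, and Jacobian $\det(S)^{-1}$; the matrix Gaussian integral then equals $\det(S)^{-(w+1/2)}$ times a universal nonzero constant.

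It remains to identify $\det S$ with $\|p_K(y_1\wedge y_2)\|^{2}$. Decompose $V\otimes\R = V_+\oplus V_-$ into the $\pm 1$-eigenspaces of $\iota$, write $y_i = y_{i,+}+y_{i,-}$, and define the positive semidefinite Gram matrices $(M_\pm)_{ij} := \pm\tfrac{1}{2}(y_{i,\pm}, y_{j,\pm})$. Since cross-terms vanish, $T = M_+ - M_-$ and $M_y = M_++M_-$, hence $S = 2M_+$. From $T > 0$ and $M_-\ge 0$ it follows that $M_+ > 0$, so in particular $y_{1,+}, y_{2,+}$ are linearly independent. Because $p_K$ annihilates the $V_-$-components and is (up to scale) the projection from $\wedge^2 V_+ \cong \so(4)$ onto one of its two $\mathfrak{su}(2)$-summands, and because the self-dual and anti-self-dual parts of a simple bivector in $\wedge^2\R^4$ have equal norm, $\|p_K(y_1\wedge y_2)\|^{2} = c_0\det(M_+)$ for a universal positive constant $c_0$. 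In particular $p_K(y_1\wedge y_2)\ne 0$, and $\det S = 4c_0^{-1}\|p_K(y_1\wedge y_2)\|^{2}$, so substituting yields $I_\infty(y_1,y_2;T) = c\cdot p_K(y_1\wedge y_2)^{w}\|p_K(y_1\wedge y_2)\|^{-(2w+1)} = c\cdot A_w(y_1\wedge y_2)$ with $c$ a nonzero constant independent of $y_1, y_2$.

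The main bookkeeping hurdles are (i) arriving at the precise exponent $2w-1$ so that the matrix Gaussian returns $\det(S)^{-(w+1/2)}$, and (ii) the identification $\det S \propto \|p_K(y_1\wedge y_2)\|^{2}$ via the $V_+/V_-$ decomposition and the self-dual/anti-self-dual norm equality for simple bivectors in $\wedge^2\R^4$. Both are short, direct computations, and no extra archimedean input beyond Corollary \ref{cor:Kequiv} and Lemma \ref{lem:pluri} is required.
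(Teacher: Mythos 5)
Your proposal is correct and follows essentially the same route as the paper: Iwasawa decomposition with the $K_W$-factor collapsing via Corollary \ref{cor:Kequiv}, reduction to a matrix Gaussian (Siegel) integral over $\GL_2$ with total determinant exponent matching $w+N+\dim(V)/2-3=2w+1$ in Haar normalization (your $2w-1$ against Lebesgue measure is the same thing), and the identification $\det(T+R(y_1,y_2))=\det(2M_+)\propto\lVert p_K(y_1\wedge y_2)\rVert^2$, which is exactly the paper's concluding lemma. Your invocation of the equality of norms of the self-dual and anti-self-dual parts of a simple bivector in $\wedge^2\R^4$ is just a slightly more explicit phrasing of the paper's step $\lVert y_{1,+}\wedge y_{2,+}\rVert^2=2\lVert p_K(y_{1,+}\wedge y_{2,+})\rVert^2$, so there is no substantive difference.
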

\begin{proof} By construction, $\mu_{-T}(hk) = j(k,i)^N \mu_{-T}(h)$ for all $h \in Sp(W)(\R)$ and $k \in K_W \simeq U(2)$.  By Corollary \ref{cor:Kequiv}, $\omega_{\psi,\Y}(1,hk)\phi_\infty(y_1,y_2) = j(k,i)^{-N}\omega_{\psi,\Y}(1,h)\phi_\infty(y_1,y_2)$.   Thus applying the Iwasawa decomposition, we obtain
\begin{align*}
I_\infty(y_1,y_2;T) &= \int_{M_Y(\R)}{\delta_P^{-1}(h)\mu_{-T}(h)\omega_{\psi,\Y}(1,h)\phi_\infty(y_1, y_2)\,dh} \\ &= p_K(y_1 \wedge y_2)^{w} \int_{\GL_2(\R)}{|\det(m)|^{-3+w+N+\dim(V)/2}e^{-2\pi (T + R(y_1,y_2), m \,^tm)}\,dm}.
\end{align*}
Here
\[
R(y_1,y_2) = \frac{1}{2}\mb{(y_1,y_1)_r}{(y_1,y_2)_r}{(y_1,y_2)_r}{(y_2,y_2)_r}
\]
and $(y,y')_r := (y,\iota(y'))$ is the positive definite majorant on $V$.  We have also used that $4|n$ so that $w+N$ is even and thus $\det(m)^{w+N} = |\det(m)|^{w+N}$.

Note that $w+N+\dim(V)/2-3 = 2w+1$.  Thus
\[
I_\infty(y_1,y_2;T) = \frac{p_K(y_1\wedge y_2)^{w}}{|\det(T + R(y_1,y_2))|^{w+1/2}} \left(\int_{\GL_2(\R)}{|\det(m)|^{2w+1} e^{-2\pi\tr(m\,^tm)}\,dm}\right).
\]
We remark that this latter integral is a so-called Siegel integral.  The proposition thus follows from the following lemma.
\end{proof}

\begin{lemma} Suppose $y_1, y_2 \in V$ and $S(y_1, y_2) > 0$.  Then $2 ||p_K(y_1 \wedge y_2)||^2 =  \det(S(y_1,y_2) + R(y_1,y_2))$, and in particular, $p_K(y_1 \wedge y_2) \neq 0$. \end{lemma}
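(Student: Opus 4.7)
The plan is to reduce the identity to a four-dimensional geometric computation on the positive-definite $+1$-eigenspace $V_+$ of $\iota$ (dimension $4$, as in the discussion preceding \eqref{eqn:pm}). Decompose $y_j = y_{j,+} + y_{j,-}$ along $V = V_+ \oplus V_-$. Since $\iota \in \O(V)$ is an involution, it is self-adjoint for $(\,,\,)$, so $V_+$ and $V_-$ are $(\,,\,)$-orthogonal. Hence
\[
(y_i,y_j) = (y_{i,+},y_{j,+}) + (y_{i,-},y_{j,-}), \qquad (y_i,y_j)_r = (y_{i,+},y_{j,+}) - (y_{i,-},y_{j,-}),
\]
and adding gives
\[
S(y_1,y_2) + R(y_1,y_2) \;=\; \bigl((y_{i,+},y_{j,+})\bigr)_{1 \le i,j \le 2},
\]
the Gram matrix of $\{y_{1,+},y_{2,+}\}$ for the positive-definite restriction $(\,,\,)|_{V_+}$. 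Thus $\det(S+R)$ is precisely the squared norm of $y_{1,+}\wedge y_{2,+}$ in $\wedge^2 V_+$ with its induced inner product.

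Next I analyze $p_K(y_1 \wedge y_2)$. Relative to $\iota$, the Cartan decomposition of $\so(V) \cong \wedge^2 V$ is $\mathfrak{k}_V \oplus \mathfrak{p}$ with $\mathfrak{k}_V = \wedge^2 V_+ \oplus \wedge^2 V_-$ and $\mathfrak{p} = V_+ \otimes V_-$. By construction $p_K$ projects onto the long-root $\mathfrak{su}_2$, which sits as one of the two $\mathfrak{su}_2$-factors of $\so(V_+) \cong \so(4) \cong \mathfrak{su}_2 \oplus \mathfrak{su}_2$, and kills $\mathfrak{p}$, $\wedge^2 V_-$, and the complementary $\mathfrak{su}_2$. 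In particular $p_K(y_1 \wedge y_2) = p_K(y_{1,+}\wedge y_{2,+})$. The two $\mathfrak{su}_2$-summands of $\wedge^2 V_+$ are exactly the self-dual and anti-self-dual parts under Hodge duality on the four-dimensional Euclidean space $V_+$. For any pure wedge $\omega = v_1 \wedge v_2 \in \wedge^2 V_+$ one has $\omega \wedge \omega = 0$, and combined with $\|\omega\|^2 = \|\omega_+\|^2 + \|\omega_-\|^2$ and $\|\omega_+\|^2 - \|\omega_-\|^2 = \pm(\omega \wedge \omega)/\mathrm{vol}$, this forces $\|\omega_+\|^2 = \|\omega_-\|^2 = \tfrac{1}{2}\|\omega\|^2$. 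Applying this with $\omega = y_{1,+}\wedge y_{2,+}$ yields
\[
\|p_K(y_1 \wedge y_2)\|^2 \;=\; \tfrac{1}{2}\|y_{1,+}\wedge y_{2,+}\|^2 \;=\; \tfrac{1}{2}\det(S+R),
\]
which is exactly the asserted identity. The non-vanishing statement is then immediate: if $S(y_1,y_2) > 0$ and there were a relation $ay_{1,+} + by_{2,+} = 0$ with $(a,b) \ne 0$, then $(a,b)(S+R)(a,b)^t = 0$, but $R$ is positive semidefinite (it is the Gram matrix for the majorant $(\,,\,)_r$), forcing $(a,b)S(a,b)^t \le 0$, contradicting $S > 0$. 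Hence $y_{1,+}\wedge y_{2,+} \ne 0$ and $p_K(y_1\wedge y_2) \ne 0$.

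The main obstacle is careful bookkeeping of normalizations. The $K_V$-invariant norm on $\so(V)$ used in the statement is defined via $\theta_\iota$ and a fixed trace pairing from \cite[Appendix A, Section 5.1]{pollackQDS}, whereas my analysis uses the natural inner product on $\wedge^2 V_+$ coming directly from $(\,,\,)|_{V_+}$. One must verify that, under the inclusion $\wedge^2 V_+ \hookrightarrow \so(V)$ and under the identification of $p_K$ with the projection to a self-dual/anti-self-dual summand, these two norms agree on the nose. This is what pins down the universal constant as $2$ (rather than some other scalar) and is the one place where Pollack's specific normalizations must be checked, rather than a general structural argument.
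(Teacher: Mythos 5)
Your proof is correct and follows essentially the same route as the paper: both identify $S(y_1,y_2)+R(y_1,y_2)$ with the Gram matrix of $y_{1,+},y_{2,+}$ and reduce to the identity $\|y_{1,+}\wedge y_{2,+}\|^2 = 2\|p_K(y_{1,+}\wedge y_{2,+})\|^2$. The paper simply asserts this last equality, whereas you justify it via the self-dual/anti-self-dual splitting of $\wedge^2 V_+$ and the vanishing of $\omega\wedge\omega$ for decomposable $\omega$; the normalization caveat you flag is genuine but is left equally implicit in the paper's own proof.
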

\begin{proof} One has $\frac{1}{2}(y,y') + \frac{1}{2}(y,\iota(y')) = (y_{+},y'_{+})$.  Thus
\[\det(S(y_1,y_2)+R(y_1,y_2)) = \det\left(\mb{(y_{1,+},y_{1,+})}{(y_{1,+},y_{2,+})}{(y_{1,+},y_{2,+})}{(y_{2,+},y_{2,+})}\right).\]
This determinant is 
\[
||y_{1,+} \wedge y_{2,+}||^2 = 2||p_K(y_{1,+} \wedge y_{2,+})||^2 = 2||p_K(y_1 \wedge y_2)||^2
\]
giving the lemma.
\end{proof}

\subsection{The function $A_w$} In this subsection, we prove that the function $g \mapsto A_{w}(y_1 \wedge y_2 g)$ is quaternionic, i.e., that it is annihilated by $D_w$.  This is the key step in showing that the theta lifts $\theta(f;\phi)$ are quaternionic modular forms.

Recall the differential operator $D_w$ that defines modular forms of weight $w$. For $Z = y_1 \wedge y_2$ with $S(y_1,y_2) > 0$, define $B_{w,Z}(g): \SO(V)(\R) \rightarrow \Vm_w$ as $B_{w,Z}(g) = A_{w}(Zg)$. It is clear that $B_{w,Z}(gk) = k^{-1} B_{w,Z}(g)$ for all $g \in \SO(V)(\R)$ and $k \in K_V$.  The following theorem is crucial to all that follows.

\begin{theorem}\label{thm:quat} Suppose $w \geq 2$.  With notation as above, the function $B_{w,Z}(g)$ is quaternionic, i.e., $D_w B_{w,Z}(g) \equiv 0$.\end{theorem}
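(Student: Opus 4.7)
The plan is to verify the differential equation $D_w B_{w,Z} \equiv 0$ by exploiting the $K_V$-equivariance of both $D_w$ and $B_{w,Z}$, reducing to a pointwise Lie-algebraic computation on $\wedge^2 V$.

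First, since $D_w$ is a $K_V$-equivariant first-order differential operator and $B_{w,Z}$ satisfies $B_{w,Z}(gk) = k^{-1} B_{w,Z}(g)$, the identity $D_w B_{w,Z}(g) = 0$ is itself $K_V$-equivariant in $g$. Writing $\g = \k \oplus \p$ for the Cartan decomposition associated to $\theta_\iota$, the action of $D_w$ on a left-$K_V$-equivariant $\Vm_w$-valued function is encoded by differentiating along $\p$ via right-invariant vector fields and then applying a specific $K_V$-equivariant projection from $\p \otimes \Vm_w$ to a smaller $K_V$-representation. I would therefore reduce the theorem to checking, for each $X \in \wedge^2 V$ with $p_K(X) \neq 0$, that the assignment
\[
\xi \longmapsto \frac{d}{dt}\bigg|_{t=0} A_w(X\exp(t\xi)) \qquad (\xi \in \p)
\]
lies in the kernel of this projection.

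Next I would compute the derivative explicitly. Letting $X\xi$ denote the infinitesimal right action of $\xi \in \so(V)$ on $X \in \wedge^2 V$, the chain and quotient rules yield
\[
\frac{d}{dt}\bigg|_{t=0} A_w(X\exp(t\xi)) = w\,\frac{p_K(X)^{w-1}\,p_K(X\xi)}{\|p_K(X)\|^{2w+1}} \;-\; (2w+1)\,\frac{p_K(X)^w \cdot \langle p_K(X), p_K(X\xi)\rangle}{\|p_K(X)\|^{2w+3}},
\]
where $\langle\,,\,\rangle$ denotes the $K_V$-invariant pairing on $\Vm_1 \simeq \mathfrak{su}_2\otimes\C$ that induces $\|\cdot\|$. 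The content of the theorem is that these two terms, viewed as elements of $\p^\vee \otimes \Vm_w$ via $\xi$, conspire to land in the kernel of the projection defining $D_w$. This cancellation is an identity that reflects the $K_V$-equivariant decomposition of $\p \otimes \Vm_w$, and the particular exponent $2w+1$ in the denominator of $A_w$ is precisely engineered so that the two contributions match and cancel.

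This is the same type of computation already carried out in \cite[Section 6]{pollackG2} and \cite[Section 2.1]{pollackE8} to establish the quaternionicity of the degenerate Heisenberg Eisenstein series built from the very same kernel $A_w$; the argument transports to the present setting essentially verbatim because it depends only on the $K_V$-representation theory of $\p \otimes \Vm_w$ and the explicit shape of $A_w$, not on the specific choice of $X$. The main technical obstacle is the representation-theoretic bookkeeping: one must fix an explicit $K_V$-equivariant isomorphism realizing the decomposition of $\p \otimes \Vm_w$, interpret $p_K(X\xi)$ correctly as an element of $\Vm_1$, and verify that the two terms in the derivative above pair into the image of the inclusion of the "quaternionic" summand. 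Once that identification is made, the matching of the constants $w$ and $2w+1$ is mechanical, completing the proof.
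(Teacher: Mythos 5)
Your setup matches the paper's: you reduce to differentiating $A_w$ along $\p$ and your derivative formula agrees with the paper's first lemma, namely
\[
X\cdot B_{w,Z}(g) = \frac{p_K(Z')^{w-1}}{\|p_K(Z')\|^{2w+3}}\Bigl(w\, p_K([Z',X])\,(p_K(Z'),p_K(Z')) - (2w+1)\, p_K(Z')\,(p_K([Z',X]),p_K(Z'))\Bigr),
\]
with $Z'=Zg$. (Two small slips: the relevant equivariance is $B_{w,Z}(gk)=k^{-1}B_{w,Z}(g)$, i.e.\ \emph{right} $K_V$-equivariance, and the differentiation is via the right regular action, i.e.\ left-invariant vector fields.) However, there is a genuine gap at exactly the point you flag as "the main technical obstacle": you assert that after contracting with a dual basis of $\p^\vee$ the two terms "conspire to land in the kernel of the projection" and that "the matching of the constants $w$ and $2w+1$ is mechanical," but this is precisely the content of the theorem and you never verify it. The cancellation is not forced by $K_V$-equivariance alone; it requires explicit contraction identities. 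In the paper one fixes $V\otimes\C \simeq V_2^{(1)}\otimes V_2^{(2)}\oplus V'$, writes $\p\otimes\C = V_2^{(1)}\otimes W_J$ with $W_J = V_2^{(2)}\otimes V'$, and for a basis $X_\gamma = v_\alpha\otimes w_\beta$ computes $X_\gamma(p) = \langle w_\beta,(w_1\wedge w_2)_\p\rangle_W\, v_\alpha$; three separate $\SL_2$-contraction lemmas then produce the coefficients
\[
\sum_\alpha \langle p, X_\gamma^\vee\rangle X_\gamma(p) = 2\,v_p^\beta\, p\, w_\beta^\vee,\qquad \sum_\alpha \langle X_\gamma(p), X_\gamma^\vee\rangle = 3\, v_p^\beta\, w_\beta^\vee,\qquad \sum_\alpha (X_\gamma(p),p)\langle p, X_\gamma^\vee\rangle = (p,p)\, v_p^\beta\, w_\beta^\vee,
\]
the last resting on identities such as $\langle u,X\rangle v - \langle v,X\rangle u = \langle u,v\rangle X$ applied repeatedly. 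Only after these are established does the sum become $\bigl(2(w-1)+3-(2w+1)\bigr)(p,p)\,v_p^\beta\, p = 0$.

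Your appeal to the Eisenstein-series computations in the cited prior works is reasonable motivation (the paper itself points there for the appearance of $A_w$), but it does not substitute for the verification: the factors $2$, $3$, and $1$ above are what make $2w+1$ the unique exponent for which the projection vanishes, and without computing them your argument establishes only that \emph{some} exponent works, not that it is $2w+1$. To complete the proof you must either carry out these contractions or give a precise citation to a statement in the prior works that covers the element $Z' = (y_1\wedge y_2)g$ with $S(y_1,y_2)>0$ (rather than the rank-one elements arising in the degenerate Eisenstein series), neither of which the proposal does.
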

\begin{proof} We begin with a simple lemma.  For $L \in \mathfrak{su}_2 \otimes \C \simeq \sl_2(\C) \simeq Sym^2(V_2)$, denote by $(L,L) \in \C$ the $\SL_2(\C)$-invariant quadratic form on $L$.  Thus, if $L \in \mathfrak{su}_2$, then $(L,L) = ||L||^2$.
\begin{lemma} Suppose $X \in \so(V)$, and $X \cdot B_{w,Z}(g) = \frac{d}{dt}\left(B_{w,Z}(ge^{tX})\right)|_{t=0}$ denotes the right regular action. Set $Z' = Zg$. Then
\begin{align*}
X\cdot & B_{w,Z}(g) = \\ & \frac{p_K(Z')^{w-1}}{||p_K(Z)'||^{2w+3}}\left(w p_K([Z',X])(p_K(Z'),p_K(Z')) -(2w+1)p_K(Z')(p_K([Z',X]),p_K(Z'))\right).
\end{align*}
\end{lemma}
\begin{proof} This follows immediately from the definitions.  Note that the quantity outside the parentheses is an element of $\Vm_{w-1}$, the quantity inside the parenthesis is an element of $\Vm$, and the product is considered as an element of $\Vm_w$.\end{proof}

To continue computing, we now choose an isomorphism 
\begin{equation}\label{eqn:VC}
V \otimes \C \simeq V_2^{(1)} \otimes V_2^{(2)} \oplus V' = V_2^{(1)} \otimes e \oplus V' \oplus V_2^{(1)} \otimes f,
\end{equation}
as in \cite[section A.3]{pollackQDS} so that $V_+ \otimes\C$ maps over to $V_2 \otimes V_2$ and $V_{-} \otimes \C$ maps over to $V'$.  More precisely, $V_+ \otimes \C$ is a non-degenerate split quadratic space of dimension four, so it it can be identified with $(M_2 = V_2^{(1)} \otimes V_2^{(2)}, \det)$ the space of $2 \times 2$ matrices $M_2$ with determinant as quadratic form.  Here $V_2^{(1)}$, $V_2^{(2)} = \Span\{e,f\}$ are two copies of the two-dimensional reperesentation of $\SL_2$, and we identify $M_2$ with $V_2^{(1)} \otimes V_2^{(2)}$ as in \cite[section A.3]{pollackQDS}.

Fix $w_1 = w_1^e e + w_1^v + w_1^f f$ and $w_2 = w_2^e e + w_2^v + w_2^f f$ in $V \otimes \C$, written in terms of the decomposition \eqref{eqn:VC}, so that $w_j^e, w_j^f \in V_2^{(1)}$ and $w_j^v \in V'$ for $j=1,2$.  Set $p = w_1^e w_2^f - w_2^e w_1^f = p_K(w_1\wedge w_2)$, considered as an element of $\Vm_1 = Sym^2(V_2).$  If $X \in \wedge^2(V \otimes \C)$, write $X(p) := p_K([w_1 \wedge w_2, X])$ for shorthand.  

Set $W_J = V_{2}^{(2)}\otimes V'$.  Denote by $\{X_\gamma\}_\gamma$ a basis of $\p \otimes \C = V_{2}^{(1)} \otimes W_J$ and $\{X_\gamma^\vee\}_\gamma$ the dual basis of $\p^\vee$.  With this notation,
\[
X_\gamma \cdot B_{w,Z}(g) =  \frac{p^{w-1}}{||p||^{2w+3}}\left(w (p,p) X_\gamma(p) - (2w+1)(X_\gamma(p),p) p\right).
\]
Contracting with $X_\gamma^\vee$, one obtains $\frac{1}{||p||^{2w+3}}$ times
\[
wp^{w-2}\left( (w-1)(p,p) \langle p, X_\gamma^\vee \rangle X_\gamma(p) + (p,p)\langle X_\gamma(p), X_\gamma^\vee \rangle p - (2w+1) (X_\gamma(p),p) \langle p, X_\gamma^\vee \rangle p\right).
\]
Theorem \ref{thm:quat} thus follows from the following proposition. \end{proof}

\begin{proposition} Let the notation be as above.  Then
\begin{equation}\label{eqn:sum0}
\sum_{\gamma}{(w-1)(p,p) \langle p, X_\gamma^\vee \rangle X_\gamma(p) + (p,p)\langle X_\gamma(p), X_\gamma^\vee \rangle p - (2w+1) (X_\gamma(p),p) \langle p, X_\gamma^\vee \rangle p}
\end{equation}
is $0$ as an element of $Sym^3(V_2) \otimes W$.
\end{proposition}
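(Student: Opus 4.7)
The plan is to verify identity~(\ref{eqn:sum0}) by direct computation in coordinates adapted to the decomposition~(\ref{eqn:VC}) $V\otimes\C = V_2^{(1)}\otimes V_2^{(2)} \oplus V'$. With $\p\otimes\C = V_2^{(1)}\otimes W_J$ for $W_J = V_2^{(2)}\otimes V'$, all three summands naturally take values in the product $V_2^{(1)}\otimes Sym^2(V_2^{(1)})\otimes W_J^\vee$. The relevant ``$Sym^3(V_2)\otimes W$'' appearing in the statement of the proposition is the $Sym^3(V_2^{(1)})\otimes W_J^\vee$ component in the Clebsch--Gordan decomposition $V_2^{(1)}\otimes Sym^2(V_2^{(1)}) \simeq Sym^3(V_2^{(1)}) \oplus V_2^{(1)}$; this is the piece of the covariant derivative retained by the $\D_w$-projection, and therefore the piece we must force to vanish.

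Concretely, I would first fix bases $\{u_1,u_2\}$ of $V_2^{(1)}$ with $\langle u_1,u_2\rangle = 1$, $\{e,f\}$ of $V_2^{(2)}$, and an orthogonal basis of $V'$. These induce a basis $\{X_\gamma\}$ of $\p\otimes\C$ with dual basis $\{X_\gamma^\vee\}$ under the Killing form. Writing $w_j = w_j^e\otimes e + w_j^v + w_j^f\otimes f$ as in the excerpt, so that $p = w_1^e w_2^f - w_2^e w_1^f \in Sym^2(V_2^{(1)})$, the next step is to compute $X_\gamma(p) = p_K([w_1\wedge w_2, X_\gamma])$ case by case, distinguishing the $V_2^{(2)}$-type of $X_\gamma$ and using the natural action of $\p$ on $V$ whereby elements of $V_2^{(1)}\otimes e$ pair into $V'$ and elements of $V'$ pair into $V_2^{(1)}\otimes V_2^{(2)}$. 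I would likewise unwrap the pairings $\langle p, X_\gamma^\vee\rangle$, $\langle X_\gamma(p), X_\gamma^\vee\rangle$, and $(X_\gamma(p), p)$ as explicit tensors. Summing over $\gamma$ then collapses $\sum_\gamma X_\gamma^\vee \otimes X_\gamma$ into a Casimir-type contraction, producing formulas that depend only on $p$, $(p,p)$, and the symplectic form on $V_2^{(1)}$.

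With these formulas in hand, each of the three terms in~(\ref{eqn:sum0}) becomes an explicit tensor in $V_2^{(1)}\otimes Sym^2(V_2^{(1)})\otimes W_J^\vee$. Projecting onto the $Sym^3(V_2^{(1)})$-isotypic component, the three terms collapse to scalar multiples of a single ``cubic'' building block constructed from $p$; the combined coefficient is a fixed linear combination of $w-1$, $1$, and $-(2w+1)$ weighted by Clebsch--Gordan constants, and it is this combination that vanishes identically. The main obstacle I expect is the index-heavy step of computing $X_\gamma(p)$ for each basis vector and carefully tracking the $V_2^{(1)}$, $V_2^{(2)}$, and $V'$ contractions through to the end; once that bookkeeping is done, the coefficients $(w-1, 1, 2w+1)$ were arranged precisely to make the final cancellation a short scalar identity in $Sym^2(V_2^{(1)})$.
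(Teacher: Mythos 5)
Your plan is essentially the paper's proof: Pollack likewise works in a basis $X_\gamma = v_\alpha\otimes w_\beta$ of $\p\otimes\C = V_2^{(1)}\otimes W_J$, fixes the $W_J$-index $\beta$, computes $X_\gamma(p)=\langle w_\beta,(w_1\wedge w_2)_\p\rangle_W\,v_\alpha$, and evaluates the three contractions summed over the two-element $V_2$-basis, finding them equal to $2$, $3$, and $1$ times $(p,p)\,v_p^\beta\,p\,w_\beta^\vee$ respectively, so the total coefficient is $2(w-1)+3-(2w+1)=0$. The only substance you have deferred is precisely those three contraction identities, which in the paper are short computations with the symplectic form on $V_2$ (e.g.\ repeated use of $\langle u,X\rangle v-\langle v,X\rangle u=\langle u,v\rangle X$).
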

\begin{proof} Let $\{v_\alpha\}_\alpha$ be a basis of $V_2$ and $\{w_\beta\}_\beta$ a basis of $W_J$ so that $X_\gamma = v_\alpha \otimes w_\beta$ is a basis of $\p \otimes \C = V_2 \otimes W_J$.  Of course, $V_2$ is two-dimensional, so the sum over $\alpha$ has two terms.  

To check the vanishing in \eqref{eqn:sum0}, we can fix $\beta$ and sum over $\alpha$.  Then the required vanishing follows from the following three lemmas.

For $w_1, w_2 \in V$, set 
\begin{align*}
(w_1 \wedge w_2)_\p &= w_1^{-} \wedge w_2^+ + w_1^{+} \wedge w_2^{-} \\ &= w_1^v \wedge (w_2^e e + w_2^f f) + (w_1^e e + w_1^f f) \wedge w_2^v \\ &= w_1^e \otimes (ew_2^v) + w_1^f \otimes (f w_2^v) - w_2^e \otimes (e w_1^v) - w_2^f \otimes (f w_1^v)
\end{align*}
This is an element of $\p = V_2^{(1)} \otimes (V_2^{(2)} \otimes V')$.  

The $\langle \,,\, \rangle_W$ below denote $W_J$-contractions.  Thus $\langle w_\beta, (w_1 \wedge w_2)_\p \rangle_W$ denotes a $W_J$-contraction between $w_\beta$, an element of $W_J$, and $(w_1 \wedge w_2)_\p$, an element of $V_{2}^{(1)} \otimes W_J$, so that $\langle w_\beta, (w_1 \wedge w_2)_\p \rangle_W \in V_2^{(1)}$.

Note that if $X_\gamma = v_\alpha w_\beta$, then
\begin{align*}
X_\gamma(p) &= p_K([w_1 \wedge w_2, v_\alpha \wedge w_\beta]) \\ &= p_K([(w_1^e e + w_1^f f) \wedge w_2^v + w_1^v \wedge (w_2^e e + w_2^f f), v_\alpha w_\beta]) \\ &= \langle w_\beta, ew_2^v \rangle_W w_1^e v_\alpha - \langle w_\beta, e w_1^v \rangle_W w_2^e v_\alpha + \langle w_\beta, f w_2^v \rangle_W w_1^f v_\alpha - \langle w_\beta, f w_1^v \rangle_W w_2^f v_\alpha.
\end{align*}
Thus $X_\gamma(p) = \langle w_\beta, (w_1\wedge w_2)_{\p}\rangle_W v_\alpha$.  For ease of notation, set $v_p^\beta = \langle w_\beta, (w_1 \wedge w_2)_\p\rangle_W$, which is an element of $V_2 = V_2^{(1)}$.

\begin{lemma} Let the notation be as above.  Then
\[\sum_{\alpha}{\langle X_\gamma(p), X_\gamma^\vee \rangle} = 3 \langle w_{\beta}, (w_1 \wedge w_2)_{\p} \rangle_W w_\beta^\vee.\]
\end{lemma}
\begin{proof} Let $\alpha_1, \alpha_2$ be our basis of $V_2 = V_2^{(1)}$.  Then
\begin{align*}
\langle X_\gamma(p), X_\gamma^\vee \rangle &= \langle v_{p,\beta} v_\alpha, v_\alpha^\vee w_\beta^\vee \rangle \\ &= (v_{p,\beta} + \langle v_{p,\beta}, v_\alpha^\vee \rangle ) w_\beta.
\end{align*}
Taking $\alpha = \alpha_1, \alpha_2$ and summing up gives $3 v_{p}^\beta w_\beta^\vee$, which is the statement of the lemma.\end{proof}

\begin{lemma}Let the notation be as above.  Then
\[\sum_{\alpha}{\langle p, X_\gamma^\vee \rangle X_\gamma(p)} = 2 \langle w_{\beta}, (w_1 \wedge w_2)_{\p} \rangle_W p w_\beta^\vee.\]
\end{lemma}
\begin{proof} This follows immediately from the fact that $\sum_{j}{\langle p, v_{\alpha_j}^\vee \rangle v_{\alpha_j}} = 2p$.\end{proof}

\begin{lemma}Let the notation be as above. Then
\[\sum_{\alpha}{ (p, X_\gamma(p)) \langle p, X_\gamma^\vee \rangle}  = (p,p) \langle w_\beta, (w_1 \wedge w_2)_{\p}\rangle_W w_\beta^\vee.\]
\end{lemma}
\begin{proof} We have
\[
(p,X_\gamma(p))\langle p, X_\gamma^\vee \rangle = (p, v_{p}^\beta v_\alpha) \langle p, v_\alpha^\vee \rangle w_\beta^\vee.
\]

We thus must check the identity $\sum_{\alpha}{(p,v_p^\beta v_\alpha) \langle p, v_\alpha^\vee \rangle} = (p,p) v_p^\beta$.  To do this, first note that for any $u, v, X \in V_2$, one has 
\begin{equation}\label{eqn:uvX}
\langle u, X\rangle v - \langle v, X \rangle u = \langle u,v \rangle X.
\end{equation}
Now, we claim
\[ \sum_{\alpha}{(p, v_p^\beta v_\alpha) v_\alpha^\vee} = -\langle p, v_p^\beta \rangle.\]
To check this, we may assume $p = \ell_1 \ell_2$ is a product of two linear factors, and then the left-hand side is
\begin{align*}
\sum_{\alpha}{(p,v_p^\beta v_\alpha) v_\alpha^\vee} &= (\ell_1 \ell_2, v_p^\beta e) f - (\ell_1 \ell_2, v_p^\beta f) e \\ &= (\langle \ell_1, v_p^\beta \rangle \langle \ell_2, e \rangle + \langle \ell_1, e \rangle \langle \ell_2, v_p^\beta \rangle) f  - (\langle \ell_1, v_p^\beta \rangle \langle \ell_2, f \rangle + \langle \ell_1, f \rangle \langle \ell_2, v_p^\beta \rangle) e \\ &= \langle \ell_1, v_p^\beta \rangle (-\ell_2) + \langle \ell_2, v_p^\beta \rangle (- \ell_1) \\ &= - \langle p, v_p^\beta \rangle.
\end{align*}

Thus, to conclude, we must check that $\langle p, \langle v_p^\beta, p \rangle \rangle = (p,p) v_p^\beta$.  For this, linearizing, it suffices to check that 
\[\langle \ell_1 \ell_2, \langle v, \ell_1' \ell_2' \rangle \rangle + \langle \ell_1' \ell_2', \langle v, \ell_1 \ell_2 \rangle \rangle = 2 (\ell_1 \ell_2, \ell_1' \ell_2') v
\]
for all $\ell_1, \ell_2, \ell_1', \ell_2', v \in V$.  Expanding the left-hand side, one checks this using the identity \eqref{eqn:uvX} four times.  This proves the proposition, and with it, Theorem \ref{thm:quat}.\end{proof}
\end{proof}

\subsection{The Fourier transform of $A_w$} In this subsection, we analyze a certain Fourier transform integral of the function $B_{w,Z}(g)$.  The result is essential to proving that the $\theta$-lifts have algebraic Fourier coefficients.

For $y_1, y_2 \in V_0$, what we need to compute is
\[\int_{N_{y_1, y_2}(\R)}{A_{w}(y_1 \wedge y_2 n g)\chi_{y_1, y_2}^{-1}(n)\,dn}.\]
Here $N_{y_1,y_2} = (\mathrm{Stab}(y_1,y_2)\cap N_U)\backslash N_U$ is $4$-dimensional and abelian.  As a function of $g$, the above integral is quaternionic, i.e it is annihilated by $D_w$.  Thus by the multiplicity one result of \cite{pollackQDS}, we know that the integral is $C_{y_1,y_2,w} W_{\chi_{y_1,y_2}}(g)$ for some constant $C_{y_1,y_2,w}$ depending on $y_1, y_2, w$.  Thus, we must compute the above integral for $g=1$ so that we can obtain $C_{y_1, y_2, w}$.

To pin down the normalizations, we proceed to compute the following:
\[
J(y_1,y_2;w):=\int_{M_{2}(\R)}{A_{w}((y_1 + r_{11} b_{-1} + r_{12} b_{-2}) \wedge (y_2 + r_{21} b_{-1} + r_{22} b_{-2}))\psi^{-1}(\tr(r))\,dr}.
\]
Here again $y_1, y_2 \in V_0$ and $r = \mm{r_{11}}{r_{12}}{r_{21}}{r_{22}} \in M_2(\R)$.  Then it is clear that $J(y_1,y_2;w) = J(y_1^+, y_2^+;w)$.  As $V_0^+$ is two-dimensional, denote by $v_1^+$, $v_2^+$ an orthonormal basis.  Thus $(y_1^+,y_2^+) = (v_1^+,v_2^+)m$ for a unique $m \in \GL_2(\R)$, and we'd like to compute $J'(m;w) := J((v_1^+,v_2^+)m;w)$ as a function of $m$.

We have $J'(m;w)$
\begin{align}
\nonumber &= \int_{M_{2}(\R)}{A_{w}((m_{11} v_1^+ + m_{21} v_2^+ + r_{11} b_{-1} + r_{21} b_{-2}) \wedge (m_{12} v_1^+ + m_{22}v_2^+ + r_{12} b_{-1} + r_{22} b_{-2}))\psi^{-1}(\tr(r))\,dr} \\ \label{eqn:Arm1} &= \frac{\det(m)^{w}}{|\det(m)|^{2w+1}} |\det(m)|^{2} \int_{M_{2}(\R)}{A_{w}((v_1^+ + r_{11} b_{-1} + r_{21} b_{-2}) \wedge (v_2^+ + r_{12} b_{-1} + r_{22} b_{-2}))\psi^{-1}(\tr(rm))\,dr}.
\end{align}
Here we have made the variable change $r \mapsto rm$ and the $|\det(m)|^{2}$ comes from the Jacobian for this change of variables.  Note that from the final expression one obtains that $J'(m;w)$ is not identically $0$ as a function of $m$ for $w >>0$, because the last integral is a Fourier transform integral.

Set $M = \diag(m,1,\,^tm^{-1}) \in \SO(V)$. For ease of notation, denote 
\[n(r) = n\left(v_1^+(r_{11} b_{-1} + r_{21}b_{-2}) + v_2^+(r_{12} b_{-1} + r_{22}b_{-2})\right)\]
so that $v_1^+ n(r) = v_1^+ + r_{11}b_{-1} + r_{21}b_{-2}$ and $v_2^+ n(r) = v_2^+ + r_{12} b_{-1} + r_{22}b_{-2}$.  Then 
\begin{align}
\nonumber \int_{M_2(\R)}{A_w(v_1^+ \wedge v_2^+ n(r)M)\psi^{-1}(\tr(r))\,dr} &= \int_{M_2(\R)}{A_w(v_1^+ \wedge v_2^+ n(m^{-1}r))\psi^{-1}(\tr(r))\,dr} \\ \label{eqn:Arm2} &= |\det(m)|^{2} \int_{M_2(\R)}{A_w(v_1^+ \wedge v_2^+ n(r))\psi^{-1}(\tr(mr))\,dr}.
\end{align}
Here the first equality is because 
\[(b_{-1},b_{-2}) m^{-1} r = \left(\,^tm^{-1} \left(\begin{array}{c} b_{-1}\\ b_{-2} \end{array}\right)\right)^t r.\]
Combining \eqref{eqn:Arm1} and \eqref{eqn:Arm2}, we obtain
\[
J'(m;w) = \frac{\det(m)^w}{|\det(m)|^{2w+1}} \int_{M_2(\R)}{A_w(v_1^+ \wedge v_2^+ n(r)M)\psi^{-1}(\tr(r))\,dr}.
\]

For $y_1, y_2 \in V_0$, set $\omega_{y_1,y_2} = e \otimes y_2 - f\otimes y_1 \in W_J$ in the notation of \cite[section A.2]{pollackQDS}.  By the multiplicity one theorem, again because $A_w(Xg)$ is a quaternionic function of $g$, the final integral is
\begin{align*}
J'(m;w) &= C_{v_1^+,v_2^+,w} \frac{\det(m)^{w}}{|\det(m)|^{2w+1}} W_{\chi_{v_1^+,v_2^+}}(M) \\ &= C_{v_1^+,v_2^+,w}  \sum_{-w \leq v \leq w}{\left(\frac{|\langle \omega_{v_1^+,v_2^+}, M r_0(i)\rangle|}{\langle \omega_{v_1^+,v_2^+}, M r_0(i)\rangle}\right)^{v} K_v(|\langle \omega_{v_1^+,v_2^+}, M r_0(i)\rangle|) \frac{x^{w+v}y^{w-v}}{(w+v)!(w-v)!}}
\end{align*}
where we have canceled the $\det(m)$'s because $W_{\chi}(M)$ contains a factor $\det(m)^w |\det(m)|$.  See \cite[page 2]{pollackE8} for any unexplained notation.  One has $\omega_{v_1^+,v_2^+} \nu(M) M^{-1} = \omega_{y_1^+,y_2^+}$ and moreover $\langle \omega_{y_1^+,y_2^+}, r_0(i) \rangle = \langle \omega_{y_1,y_2},r_0(i) \rangle$.  Consequently, $J'(m;w) = C_{v_1^+,v_2^+} W_{\chi_{y_1,y_2}}(1)$.

Putting it all together, we have proved the following.  Set
\[J(y_1,y_2,g;w) =  \int_{M_{2}(\R)}{A_{w}\left(\left((y_1 + r_{11} b_{-1} + r_{12} b_{-2}) \wedge (y_2 + r_{21} b_{-1} + r_{22} b_{-2})\right)g\right)\psi^{-1}(\tr(r))\,dr}.
\]

\begin{proposition}\label{prop:JCW} For $w >>0$, there is a nonzero constant $C_{v_1^+,v_2^+,w}$ (independent of $y_1, y_2, g$) so that $J(y_1,y_2,g;w) = C_{v_1^+,v_2^+,w} W_{\chi_{y_1,y_2}}(g)$.\end{proposition}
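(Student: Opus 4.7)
The plan is to combine the Archimedean multiplicity one principle with the explicit computation of $J'(m;w)$ already carried out in this subsection. Concretely, I would show that $g \mapsto J(y_1, y_2, g; w)$ is quaternionic on $\SO(V)(\R)$ and transforms on the left under $N_U(\R)$ by the character $\chi_{y_1, y_2}$, invoke multiplicity one to deduce proportionality with $W_{\chi_{y_1, y_2}}$, and then read off the scalar from the computation above.

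For the equivariance, observe that for each fixed $r \in M_2(\R)$, the function $g \mapsto A_w\bigl(((y_1 + r_{11}b_{-1} + r_{12}b_{-2}) \wedge (y_2 + r_{21}b_{-1} + r_{22}b_{-2}))g\bigr)$ is quaternionic by Theorem \ref{thm:quat}. Since $D_w$ acts only in $g$ and commutes both with integration over $r$ and with multiplication by $\psi^{-1}(\tr(r))$, the integral $J(y_1, y_2, g; w)$ is again quaternionic. A change of variables absorbing a left $N_U$-translation into a shift of the integration variable $r$ (using the explicit formulas in the excerpt for how $V$ transforms under $N_U$) produces the character $\chi_{y_1, y_2}$ on the left. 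By the Archimedean multiplicity one theorem as used in \cite{pollackQDS}, the space of moderate-growth quaternionic functions on $\SO(V)(\R)$ with this equivariance is one-dimensional and spanned by $W_{\chi_{y_1, y_2}}$. Hence $J(y_1, y_2, g; w) = C(y_1, y_2, w) W_{\chi_{y_1, y_2}}(g)$ for some scalar. The computation in the paragraphs preceding the proposition evaluates both sides at $g = M = \diag(m, 1, \,^tm^{-1})$ with $(y_1^+, y_2^+) = (v_1^+, v_2^+)m$, and exhibits $C(y_1, y_2, w)$ as the $m$-independent constant $C_{v_1^+, v_2^+, w}$; note that the $V_0^-$-components of $y_1, y_2$ drop out because $A_w$ factors through $p_K$, which vanishes on $V_{-} \wedge V_{-}$.

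The hard part will be proving non-vanishing of $C_{v_1^+, v_2^+, w}$ for $w \gg 0$. I would exploit the final form of \eqref{eqn:Arm1}, which, up to a nonvanishing power of $\det(m)$, realizes $C_{v_1^+, v_2^+, w} W_{\chi_{y_1, y_2}}(M)$ as the Fourier transform in $m$ of $A_w$ restricted to an explicit affine slice in $\wedge^2 V$. For $w$ sufficiently large, $A_w$ is continuous and integrable on this slice, and is manifestly not identically zero, so by injectivity of the Fourier transform it is not identically zero as a function of $m$. Since $W_{\chi_{y_1, y_2}}(M)$ is nonvanishing on a dense open set by the explicit $K$-Bessel formula recalled in the excerpt, comparing the two sides forces $C_{v_1^+, v_2^+, w} \neq 0$, completing the proof.
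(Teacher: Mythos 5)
Your proposal is correct and follows essentially the same route as the paper: quaternionicity of $g \mapsto A_w(Zg)$ (Theorem \ref{thm:quat}) together with the archimedean multiplicity one theorem gives proportionality to $W_{\chi_{y_1,y_2}}$, the change of variables $r \mapsto rm$ identifies the constant with the one for $(v_1^+, v_2^+)$ independently of $y_1, y_2$, and nonvanishing for $w \gg 0$ is read off from the fact that the remaining integral is a Fourier transform of a nonzero integrable function. No substantive differences from the paper's argument.
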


\subsection{Finiteness lemma} We require the following lemma, which will be used below.

\begin{lemma} Suppose $\phi \in S(\X(\A))$, and $\mu_T(h)$ is as defined in subsection \ref{subsec:Poincare}. Then for $w >>0$ the sum
\[\sum_{y_1, y_2 \in V(\Q)}\int_{N_Y(\Q)\backslash \Sp(W)(\A)}{|\mu_T(h)||\omega_{\psi}(g,h)\phi(y_1e_1+y_2e_2)|\,dh}\]
is finite.  In particular, $w > \binom{\dim(V)}{2}$ suffices.\end{lemma}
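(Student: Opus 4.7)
The plan is to factorize the adelic integral over places, reduce to a purely archimedean estimate using that both $\mu_{T,p}$ (modulo $N_Y$) and $\phi_p$ are compactly supported at the finite places, and then bound the remaining archimedean integral by combining the exponential decay of $\mu_{T,\infty}$ with the explicit Gaussian form of $\phi_\infty$, reducing finally to a matrix Gaussian integral and a convergent lattice sum. I take $\phi = \phi_\infty \otimes \phi_f$ a pure tensor (the general case follows by linearity). Since $N_Y(\A)$ acts on both $|\mu_T|$ and $|\omega_\psi(g,h)\phi|$ only through unit-modulus $\psi$-factors, the integral unfolds as $\mathrm{vol}(N_Y(\Q)\backslash N_Y(\A)) \int_{N_Y(\A)\backslash \Sp(W)(\A)}$, which by Iwasawa factorizes over places. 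At each finite prime $p$, the support condition on $\mu_{T,p}$ confines the $M_Y$-component of $h_p$ to a compact subset of $\GL_2(\Q_p)$, and in turn the compact support of $\omega_\psi(g,h)\phi_f$ restricts the sum to $(y_1, y_2) \in L^2$ for some lattice $L \subset V(\Q)$.

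At the archimedean place, writing $h_\infty = n_Y(s) m k$ with $m \in \GL_2(\R)$ and $k \in K_W$, a direct computation using \eqref{eqn:Paction}, $\delta_{P_Y}(m) = |\det m|^3$, and the Jacobian $|\det m|^{-2}$ between Lebesgue and Haar measure on $\GL_2(\R)$ shows that the combined $|\det m|$-exponent in the integrand assembles to $N + \dim V/2 - 5 = w-1$. By Corollary \ref{cor:Kequiv}, $|\omega_\psi(g_\infty, k)\phi_\infty|$ is independent of $k$, and from $\phi_\infty(y_1, y_2) = p_K(y_1\wedge y_2)^w e^{-2\pi(\|y_1\|^2+\|y_2\|^2)}$ together with bounded distortion of the majorant under $g_\infty$ I obtain
\[
|\omega_\psi(g_\infty, k)\phi_\infty(y_1, y_2)| \leq C \|y_1 \wedge y_2\|^w e^{-c(\|y_1\|^2 + \|y_2\|^2)}.
\]
Using $(y_1 m_{11}+y_2 m_{21}) \wedge (y_1 m_{12}+y_2 m_{22}) = (\det m)(y_1 \wedge y_2)$ and $\|(y_1, y_2) m\|^2 = \tr(G(y) m\,^tm)$ with $G(y)_{ij} := (y_i, y_j)$, evaluating at $(y_1, y_2) m$ gives the pointwise bound $C |\det m|^w \|y_1 \wedge y_2\|^w e^{-c\tr(G(y) m\,^tm)}$.

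Exchanging sum and integral (all integrands are nonnegative), the problem reduces to finiteness of
\[
\sum_{(y_1, y_2) \in L^2} \|y_1 \wedge y_2\|^w \int_{\GL_2(\R)} |\det m|^{2w-1} e^{-\tr(A_y m\,^tm)} \prod dm_{ij},
\]
where $A_y := 2\pi T + cG(y)$. Via the substitution $m \mapsto A_y^{-1/2} m$ the inner matrix Gaussian integral evaluates to $C'(\det A_y)^{-w-1/2}$. Only linearly independent pairs contribute, and for these the expansion $\det(2\pi T + cG(y)) = \det(2\pi T) + 2\pi c\,\tr(\mathrm{adj}(T) G(y)) + c^2\|y_1 \wedge y_2\|^2$ yields both $\det A_y \gtrsim \|y_1\|^2 + \|y_2\|^2$ and $\det A_y \gtrsim \|y_1 \wedge y_2\|^2$. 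Since $\wedge^2 L$ is discrete in $\wedge^2 V(\R)$, $\|y_1 \wedge y_2\|$ is bounded below on linearly independent lattice pairs, so interpolating between the two lower bounds absorbs the $\|y_1 \wedge y_2\|^w$ factor and leaves a residual sum $\sum_{y \in L^2} (\|y_1\|^2 + \|y_2\|^2)^{-s}$, which converges over the rank-$2\dim V$ lattice for $s > \dim V$. This holds for $w$ sufficiently large, and the stated bound $w > \binom{\dim V}{2}$ is more than enough.

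The main technical obstacle is the careful bookkeeping of the three contributions of $|\det m|$ producing the crucial cancellation to $w-1$, together with the coordinated use of both lower bounds on $\det A_y$ to simultaneously control the two degenerations, $\|y_i\| \to \infty$ and $\|y_1 \wedge y_2\| \to 0$, in a single convergent estimate.
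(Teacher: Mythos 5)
Your proof is correct and follows essentially the same route as the paper's: after restricting to a lattice via the finite-place data, reduce by Iwasawa and the matrix Gaussian integral to the convergence of $\sum \|y_1\wedge y_2\|^{w}\,\det(T+R(y_1,y_2))^{-(w+1/2)}$, and conclude using $\|y_1\wedge y_2\|^{2}\lesssim \det R(y_1,y_2)$ together with $\det(T+R(y_1,y_2))\gtrsim 1+\|y_1\|^{2}+\|y_2\|^{2}$ --- the paper leaves exactly these manipulations implicit by citing Proposition \ref{prop:Awint}. One notational caveat: your Gram matrix $G(y)_{ij}=(y_i,y_j)$ must be taken with respect to the positive-definite majorant $(y_i,\iota(y_j))$ (the paper's $R(y_1,y_2)$), not the signature-$(4,n)$ form, or the Gaussian factor does not decay.
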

\begin{proof} The quantity of the lemma is bounded by a constant times
\[\sum_{y_1, y_2 \in \Lambda'} ||p_K(y_1\wedge y_2)||^w \int_{\GL_2(\R)}{|\det(m)|^{2w+1} e^{-2\pi(T + R(y_1,y_2),m \,^tm)}\,dm}\]
for some lattice $\Lambda'$ in $V(\R)$.  This follows from the same manipulations as in the proof of Proposition \ref{prop:Awint}.  Thus we must check the convergence of
\[\sum_{y_1, y_2 \in \Lambda'}{ \frac{||p_K(y_1\wedge y_2)||^w}{|\det(T + R(y_1,y_2))|^{w+1/2}}}.\]
Note that $T$ is fixed, and both $T$ and $R(y_1,y_2)$ are positive definite.  Moreover, $||p_K(y_1\wedge y_2)|| \leq C_0 \det(R(y_1,y_2))^{1/2}$ for some constant $C_0$. The lemma now follows easily.\end{proof}

\section{Modular forms} In this section we put together the results of the previous sections to obtain the global theorems, as stated in the introduction.

We have the following proposition.
\begin{proposition}\label{prop:thetaNice} Suppose $f$ is an automorphic form on $\Sp(W)$ corresponding to a holomorphic Siegel modular form of weight $N = w+2-n/2$, with $w >>0$. Suppose $\phi' \in S(\X(\A_f))$, set $\phi = F(\phi') \in S(\X_U(\A_f))$ the partial Fourier transform, and denote $\theta(f,\phi' \otimes \phi_\infty)$ the theta-lift of $\overline{f}$ using $\phi'$ and the special archimedean test function $\phi_\infty$. Then for $g \in \SO(V)(\R)$ we have the following Fourier expansion:
\[
 \theta(f, \phi' \otimes \phi_\infty)_Z(g) =\phi(f,\phi'\otimes \phi_\infty)_{deg}(g) + C_{v_1^+,v_2^+,w} \sum_{y_1,y_2 \in V_0(\Q): S(y_1,y_2) > 0}{a(y_1,y_2;\phi) W_{\chi_{-y_1,-y_2}}(g)}
\]
with notation as follows: $\theta(f,\phi'\otimes \phi_\infty)_{deg}$ denotes the sum of all the degenerate terms in the Fourier expansion of $\theta(f, \phi' \otimes \phi_\infty)_Z$, i.e., the sum of the Fourier terms $\theta(f, \phi' \otimes \phi_\infty)_{\chi_{y_1,y_2}}$ with $y_1, y_2 \in V_0$ with $\dim \Span\{y_1, y_2 \} \leq 1$; and
\begin{equation}\label{eqn:FCfte}
a(y_1,y_2;\phi) = \int_{M_Y(\A_f)}{\delta_P(h)^{-1}|\det(h)|_X^{n/2}\phi^K((b_1f_1 + b_2 f_2 + y_1 e_1 + y_2 e_2)h)\overline{a_{f}(S(y_1,y_2))(h)}\,dh}
\end{equation}
where $\phi^K(x) = \int_{\Sp_4(\widehat{\Z})}{\omega_{\psi,\Y_U}(1,k) \phi(x)\,dk}$.
\end{proposition}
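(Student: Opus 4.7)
The plan is to combine Proposition \ref{prop:FC1} with the archimedean computations of Section \ref{sec:Arch} and an Iwasawa decomposition at the finite places. Throughout I work in the $\Y_U$-model, so that the test function for the lift is $\tilde\phi = \phi \otimes F(\phi_\infty) \in S(\X_U(\A))$.

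I begin by Fourier-expanding $\theta(\bar f, \phi' \otimes \phi_\infty)_Z(g)$ along the abelian quotient $N_U/Z \simeq V_0 \otimes U^\vee$, whose unitary characters are indexed by $(y_1, y_2) \in V_0(\Q)^2$. The pairs with $\dim \Span\{y_1, y_2\} \leq 1$ are collected into $\theta(f, \phi' \otimes \phi_\infty)_{deg}$. For a non-degenerate pair, Proposition \ref{prop:FC1}, applied with $v_j = y_j$, gives
\[\theta(\bar f, \phi' \otimes \phi_\infty)_{\chi_{-y_1, -y_2}}(g) = \int_{N_Y(\A) \backslash \Sp(W)(\A)} \omega_{\psi, \Y_U}(g, h) \tilde\phi(z_{y_1, y_2}) \bar f_{-S(y_1, y_2)}(h) \, dh.\]
Since $f$ is holomorphic of weight $N$, the Fourier coefficient $\bar f_{-T}(h)$ vanishes unless $T > 0$, which restricts the sum to $S(y_1, y_2) > 0$; and for such $T$, it factors as $\mu_{-T}(h_\infty)\cdot\overline{a_f(T)(h_f)}$, where $\mu_{-T}$ is the anti-holomorphic archimedean Whittaker function from Subsection \ref{subsec:Poincare} and $a_f(T)(h_f)$ encodes the finite-adelic data.

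The integral then splits as $I_\infty(g)\cdot I_f$. For $I_\infty$, using that $F$ intertwines the two models of the Weil representation, I interchange the $\Sp(W)(\R)$-integral with the $U^\vee \otimes X(\R)$-integral defining $F(\phi_\infty)$; since the inner substitution $y_i \mapsto y_i + r_{i1}b_{-1}+r_{i2}b_{-2}$ leaves $S(y_1, y_2) = T$ unchanged, Proposition \ref{prop:Awint} applies and yields, up to a nonzero constant depending only on $w$,
\[I_\infty(g) = \mathrm{const} \cdot \int_{M_2(\R)} A_w\bigl(((y_1 + r_{11}b_{-1} + r_{12}b_{-2}) \wedge (y_2 + r_{21}b_{-1} + r_{22}b_{-2}))g\bigr) \psi(\tr r)\, dr.\]
The substitution $r \mapsto -r$, combined with the identity $(y_1 - X_1) \wedge (y_2 - X_2) = (-y_1 + X_1) \wedge (-y_2 + X_2)$, converts this integral into $J(-y_1, -y_2, g; w)$, which by Proposition \ref{prop:JCW} equals $C_{v_1^+, v_2^+, w} \cdot W_{\chi_{-y_1, -y_2}}(g)$ (after absorbing the earlier constant into the normalization of $C_{v_1^+, v_2^+, w}$).

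For $I_f$, I use the Iwasawa decomposition $h = mk$ with $m \in M_Y(\A_f)$ and $k \in \Sp_4(\widehat{\Z})$; the quotient measure on $N_Y \backslash \Sp(W)$ contributes the factor $\delta_P(m)^{-1}$. The right $\Sp_4(\widehat{\Z})$-invariance of $\overline{a_f(T)}$ allows the $k$-integration to slide onto the Schwartz function, producing $\phi^K$, and the Weil representation action of $M_Y$ on $S(\X_U)$ given by equation \eqref{PXU} contributes the remaining $|\det(m)|_X^{n/2}$. The resulting expression matches \eqref{eqn:FCfte} exactly, so $I_f = a(y_1, y_2; \phi)$. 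Multiplying $I_\infty(g) \cdot I_f$ and summing yields the stated Fourier expansion. The main delicate point will be the archimedean step: carefully tracking the $\psi$ versus $\psi^{-1}$ sign produced by $F$, verifying that the constants from Propositions \ref{prop:Awint} and \ref{prop:JCW} combine into a single constant independent of $(y_1, y_2)$, and identifying the resulting Whittaker function as $W_{\chi_{-y_1, -y_2}}$ with the correct sign convention. All interchanges of summation and integration are justified by the finiteness lemma of Subsection 3.5.
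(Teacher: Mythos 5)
Your proposal follows essentially the same route as the paper: Proposition \ref{prop:PFglobal} to pass to the $\Y_U$-model, Proposition \ref{prop:FC1} for the nondegenerate Fourier coefficients, holomorphy of $f$ to cut the sum down to $S(y_1,y_2)>0$ and to factor the integral into finite and archimedean parts, Iwasawa at the finite places to produce \eqref{eqn:FCfte}, and Lemma \ref{lem:PFm} together with Propositions \ref{prop:Awint} and \ref{prop:JCW} for the archimedean part. The one point where you diverge is that you evaluate the archimedean integral $I_\infty(g)$ at a general $g \in \SO(V)(\R)$, whereas Proposition \ref{prop:Awint} is only stated for the untranslated $\phi_\infty$ (i.e.\ at $g=1$); the paper instead computes everything at $g=1$ and then upgrades to general $g$ by observing that the lift is a quaternionic modular form, so the multiplicity-one theorem of \cite{pollackQDS} forces each nondegenerate Fourier term to be the computed constant times $W_{\chi_{-y_1,-y_2}}(g)$. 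Your version is salvageable directly --- the proof of Proposition \ref{prop:Awint} goes through verbatim with $\phi_\infty$ replaced by $\phi_\infty(\cdot\, g)$, replacing $R(y_1,y_2)$ by $R(y_1g,y_2g)$ and $p_K(y_1\wedge y_2)$ by $p_K((y_1\wedge y_2)g)$ with the same Siegel-integral constant --- but as written you are invoking that proposition outside its stated scope, so either supply this extension or quote the quaternionicity/multiplicity-one argument as the paper does. The sign bookkeeping via $r\mapsto -r$ and the identification with $W_{\chi_{-y_1,-y_2}}$ is fine and matches the paper's passage from $\psi(\tr(r))$ to the $\psi^{-1}$ appearing in $J(y_1,y_2,g;w)$.
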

In the statement of the proposition, the function $a_{f}(T)(h)$, for $h_f \in M_Y(\A_f)$ is defined by the equality
\[f(h_f h_\infty) = \sum_{T > 0}{a_f(T)(h_f) j(h_\infty,i)^{-N} e^{2 \pi i (T, h_\infty \cdot i)}}.\]
\begin{proof} First, by Proposition \ref{prop:PFglobal}, we can compute the theta-lift using $F(\phi' \otimes \phi_\infty)$.  Then from Proposition \ref{prop:FC1} we have
\[\theta(f,\phi \otimes F(\phi_\infty))_{-y_1,-y_2}(1) = \int_{N_Y(\A)\backslash \Sp(W)(\A)}{ \omega_{\psi,\Y_U}(1,h)\phi(z_{y_1,y_2}) \overline{f}_{-S(y_1,y_2)}(h)\,dh}.\]

This integral factors into an archimedean and finite adelic part.  Applying the Iwasawa decomposition, the finite adelic part gives the right-hand side of \eqref{eqn:FCfte}.  By Corollary \ref{cor:Kequiv} and the definition of the action $\omega_{\psi,\Y_U}$ on $S(\X_U)$, one has $\omega_{\psi,Y_U}(1,k)F(\phi_\infty) = j(k,i)^{-N} F(\phi_\infty)$ for $k  \in K_{\Sp(W),\infty} \simeq U(2)$.  Applying Iwasawa again, the archimedean part gives
\[D(y_1,y_2) := \int_{M_Y(\R)}{\delta_P(h)^{-1} \mu_{-S(y_1,y_2)}(h) \omega_{\psi,\Y_U}(1,h)F(\phi_\infty)(z_{y_1,y_2})\,dh}.\]

By Lemma \ref{lem:PFm}, this is
\begin{align*}
D(y_1,y_2) &= \int_{M_Y(\R)}\delta_P(h)^{-1} \mu_{-S(y_1,y_2)}(h) |\det(h)|_X^{n/2+2} \\ & \;\;\;\; \times \left(\int_{X^2(\R)}\psi(\langle z_1, f_1\rangle + \langle z_2, f_2 \rangle)\phi_\infty((y_1e_1+y_2e_2+b_{-1}z_1 + b_{-2}z_2)h)\,dz_1\,dz_2\right)\,dh.
\end{align*}
Now, because $||y+b||^2 = ||y||^2 + ||b||^2$ for $y \in V_0$ and $b \in U^\vee$, it is easy to see that the double integral $D(y_1,y_2)$ converges absolutely.  Thus changing the order of integration, one obtains
\[D(y_1,y_2) = \int_{M_2(\R)}{A_{w}((y_1+r_{11}b_{-1} + r_{12} b_{-2}) \wedge (y_2 + r_{21}b_{-1}+r_{22}b_{-2}))\psi(\tr(r))\,dr}\]
by Proposition \ref{prop:Awint}. But now, by Proposition \ref{prop:JCW}, this is $C_{v_1^+,v_2^+,w} W_{\chi_{-y_1,-y_2}}(1)$.  The proposition follows because the theta-lift is a quaternionic modular form.\end{proof}

\subsection{Level one} Putting everything together, we have the following theorem which describes the Fourier coefficients more precisely in the level one setting.

\begin{theorem}\label{thm:level1} Suppose that $F(Z) = \sum_{T > 0}{a_F(T) e^{2\pi i (T,Z)}}$ is a classical level one cuspidal Siegel modular form on $\Sp_4$ of weight $N=w+2-n/2$, and assume that $w >>0$ is even.  Denote by $f$ the automorphic function on $\Sp_4(\A)$ associated to $F$ so that $f(g) = j(g,i)^{-N}F(g \cdot i)$ if $g \in \Sp_4(\R)$ and let $\theta(f)$ be the theta-lift of $\overline{f}$ with all unramified data.  That is $\theta(f) = \theta(f;\phi)$ where $\phi = \phi_f \otimes \phi_\infty$ and $\phi_f$ is the characteristic function of $\widehat{\Z} \otimes \Lambda^2$ for the even unimodular lattice $\Lambda = U(\Z) \oplus V_0(\Z) \oplus U^\vee(\Z)$.  Then for $y_1, y_2 \in V_0(\Z)$ with $\dim\Span\{y_1,y_2\} =2$, one has that the Fourier coefficient $a_{\theta(f)}(y_1,y_2)$ is given by
\[
a_{\theta(f)}(y_1,y_2) = \sum_{r \in \GL_2(\Z)\backslash (\GL_2(\Q) \cap M_2(\Z)) \text{ s.t. } (y_1,y_2)r^{-1} \in V_0(\Z)^2}{|\det(r)|^{w-1} a_F( \,^tr^{-1} S(y_1,y_2) r^{-1})^*}.
\]
\end{theorem}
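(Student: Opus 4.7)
The plan is to unwind the integral formula from Proposition \ref{prop:thetaNice} in the level one setting. First, because $\phi_f$ is the characteristic function of $(\widehat{\Lambda})^2$ with $\Lambda$ even unimodular, it is $\Sp_4(\widehat{\Z})$-invariant under $\omega_\psi$; by intertwining, $\phi = F(\phi_f)$ is $\Sp_4(\widehat{\Z})$-invariant under $\omega_{\psi,\Y_U}$ and $\phi^K = \phi$ once we normalize $\mathrm{vol}(\Sp_4(\widehat{\Z})) = 1$. A direct partial Fourier transform computation, using that the pairing between $U^\vee(\widehat{\Z})$ and $U(\widehat{\Z})$ is perfect (self-duality of the even unimodular lattice), shows that $F(\phi_f)$ is the characteristic function of the integer lattice $U(\widehat{\Z}) \otimes X \oplus V_0(\widehat{\Z}) \otimes X \oplus U(\widehat{\Z}) \otimes Y$ inside $\X_U(\A_f)$, with no residual volume constant.

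Next, using $\GL_2(\A_f) = \GL_2(\Q) \cdot \GL_2(\widehat{\Z})$ together with right $\GL_2(\widehat{\Z})$-invariance of both $\phi$ and $a_f$, the integral over $M_Y(\A_f)$ in Proposition \ref{prop:thetaNice} collapses to a sum over $\gamma \in \GL_2(\Q)/\GL_2(\Z)$. Identifying the Siegel Levi $M_Y$ with $\GL_2$ via $\gamma \mapsto \mm{\gamma}{0}{0}{{}^t\gamma^{-1}}$, one computes that $z_{y_1,y_2}\gamma = b_1(f_1\gamma) + b_2(f_2\gamma) + y_1(e_1\gamma) + y_2(e_2\gamma)$ lies in the support lattice iff $(y_1,y_2)\gamma \in V_0(\Z)^2$ and $(f_1, f_2)\gamma = (f_1, f_2){}^t\gamma^{-1} \in Y(\widehat{\Z})^2$, the latter being equivalent to $\gamma^{-1} \in M_2(\Z)$; the value is then one. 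Setting $r = \gamma^{-1}$, these become exactly the conditions $r \in \GL_2(\Q) \cap M_2(\Z)$ and $(y_1,y_2) r^{-1} \in V_0(\Z)^2$ of the theorem, and one verifies that the summand is then invariant under both left and right $\GL_2(\Z)$-multiplication on $r$ (using $N$ even), so left cosets are a legitimate indexing set.

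Finally, to compute $a_f(T)(\gamma)$ for $\gamma \in \GL_2(\Q)$, I would transfer $\gamma$ from the finite to the archimedean component via left $\Sp_4(\Q)$-invariance, absorb the residual $\GL_2(\widehat{\Z})$-factor by right $\Sp_4(\widehat{\Z})$-invariance, and apply the classical Fourier expansion; using that the embedded $\gamma^{-1}$ has automorphy factor $\det(\gamma)$ and acts by $Z \mapsto \gamma^{-1} Z\, {}^t\gamma^{-1}$ on $\H_2$, one obtains $a_f(T)(\gamma) = \det(\gamma)^{-N} a_F({}^t\gamma T \gamma)$. Substituting $r = \gamma^{-1}$ yields $\det(r)^N a_F({}^tr^{-1} S(y_1,y_2) r^{-1})$, which combines with $|\det(\gamma)|_{\A_f}^{n/2-3} = |\det(r)|^{n/2-3}$ (from $\delta_P(h)^{-1}|\det(h)|_X^{n/2}$ and the product formula) into the total exponent $|\det(r)|^{w-1}$; the hypotheses $n = 8k+4$ and $w$ even force $N = w+2-n/2$ to be even, so $\det(r)^N = |\det(r)|^N$. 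As a side observation, the support condition $(y_1,y_2)r^{-1} \in V_0(\Z)^2$ is exactly what makes ${}^tr^{-1} S(y_1,y_2) r^{-1} = S((y_1,y_2)r^{-1})$ half-integral, matching the support of $a_F$. The main difficulty will be keeping the various normalizations (Haar measures, $\mathrm{vol}(\GL_2(\widehat{\Z}))$, the self-dual Fourier measure, and the overall constant from Proposition \ref{prop:thetaNice}) mutually consistent, and carefully tracking the signed powers of $\det(\gamma)$ through the parity arguments so that the final equality is exact.
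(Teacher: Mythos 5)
Your proposal is correct and follows essentially the same route as the paper: unwind the adelic integral \eqref{eqn:FCfte} from Proposition \ref{prop:thetaNice}, use self-duality of $\Lambda$ to identify the support of the partially Fourier-transformed unramified Schwartz function, collapse the $M_Y(\A_f)$-integral to a sum over $\GL_2(\Z)\backslash(\GL_2(\Q)\cap M_2(\Z))$ via the support conditions $r \in M_2(\widehat{\Z})$ and $(y_1,y_2)r^{-1}\in V_0(\widehat{\Z})^2$, and combine $\delta_P^{-1}|\det|^{n/2}$ with $a_f(T)(h_\Q)=\det(h_\Q)^{-N}a_F({}^th_\Q T h_\Q)$ and the product formula to get the exponent $w-1$. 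Your bookkeeping of the determinant powers and the parity of $N$ matches the paper's; the only difference is that you spell out the partial Fourier transform and coset-invariance steps that the paper leaves implicit.
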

\begin{proof}\label{thm:F} Suppose $F$ is as in the statement of the theorem, and $h \in M_Y(\A_f)$.  We can write $h = h_\Q h_\infty^{-1} k$ for $h \in M_Y(\A_f)$, $h_\Q \in M_Y(\Q)$, $h_\infty \in M_Y(\R)$ the archimedean part of $h_\Q$ and $k \in M_Y(\widehat{\Z})$.  Then 
\[a_F(T)(h) = a_F(T)(h_\Q h_\infty^{-1} k) = a_F(T \cdot h_\Q) \det(h_\infty)_X^{-N}.\]
Plugging this in to \eqref{eqn:FCfte}, one obtains
\[a(y_1,y_2) = \int_{M_Y(\A_f)}{|\det(h)_X|^{n/2-3+N}\phi((b_1f_1 + b_2f_2 + y_1 e_1 + y_2e_2)h)a_F(S(y_1,y_2) \cdot h_\Q)^*\,dh}.\]
Note that $n/2-3+N = w-1$. Set $h = \diag(r^{-1},r^t)$.  Because $\phi_f$ is the characteristic function of the lattice $\left(U(\Z) \otimes W(\Z) \oplus V_0(\Z) \otimes X(\Z)\right) \otimes \widehat{\Z}$, $\phi((b_1 f_1 + b_2 f_2)h) \neq 0$ if and only if $r \in M_2(\widehat{\Z})$, and then $\phi((y_1e_1 + y_2 e_2)h) \neq 0$ if and only if $(y_1, y_2) r^{-1} \in V_0(\widehat{\Z})$.  The theorem follows. \end{proof}

\subsection{Modular forms on $G_2$} Assume now that $V$ is $8$-dimensional.  By Rallis \cite[Chapter I, section 3]{rallisHD}, the theta-lift $\theta(f)$ is a cusp form on $\SO(V)$.  Consequently, the degenerate terms $\theta(f;\phi'\otimes \phi_\infty)_{deg}$ in Proposition \ref{prop:thetaNice} are $0$ in this case.  Consequently, we may normalize the $\theta$-lift by dividing by $C_{v_1^+,v_2^+,w}$, and see that the Fourier coefficients $a(y_1,y_2;\phi)$ are algebraic numbers if the Fourier coefficients of $f$ are.

In this case that $V$ is $8$-dimensional, it turns out that restricting cuspidal modular forms from $\SO(V)$ to $G_2$ again produces a \emph{cuspidal} modular form. This is proven in Corollary \ref{cor:restrict} below, of which the main step is the following lemma.

\begin{lemma}\label{lem:push} Denote by $E = \R \times \R \times \R$, and suppose that $v=(a,b,c,d) \in W_{E} = \R \oplus E \oplus E \oplus \R$ is positive definite, i.e., $v >0$.  Let $v' =  (a',b',c',d'):=(a,\tr(b)/3,\tr(c)/3,d) \in W_{\R}$ be the image of the $\GL_2$-equivariant projection $W_{E} \rightarrow W_{\R}$.  Then $v' > 0$.\end{lemma}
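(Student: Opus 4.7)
The plan is to realize $v'$ as a convex combination of positive definite elements of $W_\R$ and then invoke convexity of the positive cone. For $i \in \{1,2,3\}$, let $\pi_i \colon E \to \R$ denote the $i$th coordinate projection. Each $\pi_i$ is a homomorphism of cubic norm structures that sends the positive cone $E^+ = \R_{>0}^3$ into $\R_{>0}$, and it induces a $\GL_2$-equivariant map $\Pi_i \colon W_E \to W_\R$ by $(a,b,c,d) \mapsto (a,\pi_i(b),\pi_i(c),d)$.

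The first step is to check that each $\Pi_i$ sends positive definite elements of $W_E$ to positive definite elements of $W_\R$. Using the characterization of positivity via the associated $J$-valued binary cubic $f_v(s,t) = as^3 + 3bs^2t + 3cst^2 + dt^3$ lying in the positive cone $J^+$ for every $(s,t)$ in the open positive quadrant of $\R^2$, this is immediate: applying $\pi_i$ to $f_v(s,t)$ produces $f_{\Pi_i(v)}(s,t)$, so componentwise positivity of $f_v$ in $E^+ = \R_{>0}^3$ forces positivity of each $f_{\Pi_i(v)}$ in $\R$. A direct calculation also gives
\[
v' \;=\; \frac{1}{3}\bigl(\Pi_1(v)+\Pi_2(v)+\Pi_3(v)\bigr),
\]
since $a$ and $d$ are unchanged by each $\Pi_i$ while averaging the middle coordinates recovers $\tr(b)/3$ and $\tr(c)/3$.

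To finish, I would observe that the positive cone in $W_\R$ is convex: it is cut out by the open conditions $as^3 + 3bs^2t + 3cst^2 + dt^3 > 0$ as $(s,t)$ ranges over $\R_{>0}^2$, each of which is linear in the coordinates $(a,b,c,d)$, so the cone is an intersection of open half-spaces. Hence the convex combination $v'$ of the three positive definite elements $\Pi_i(v)$ is itself positive definite. The main thing to pin down carefully is the characterization of positivity in $W_J$, in particular its componentwise behavior when $J$ is a product of cubic norm structures; granted that characterization, the rest of the proof is a short convexity argument.
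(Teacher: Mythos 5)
Your argument rests on a characterization of positivity that is not the one in play in this paper, and both of its key steps fail as a result. Here (following \cite[Example 4.4.2]{pollackLL}) the condition $v>0$ for $v=(a,b,c,d)\in W_J$ means that the associated $J$-valued \emph{binary quadratic form}
\[
S(v)=\mb{b^\#-ac}{\ast}{\ast}{c^\#-db}
\]
is positive definite (componentwise, when $J=E=\R^3$); it does \emph{not} mean that the cubic form $f_v(s,t)$ takes values in $J^+$ on the open positive quadrant. For $J=\R$, $\det S(v)$ is a positive multiple of the discriminant of the associated binary cubic, so $v>0$ forces the cubic to have three distinct real roots---and such a cubic changes sign on the open positive quadrant. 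Concretely, the element $v'=(-1,0,1,0)$ arising in Corollary \ref{cor:G2level1} is positive in the paper's sense, yet its cubic $-s^3+3st^2$ is positive at $(1,1)$ and negative at $(2,1)$. The cone you describe is simply a different cone.

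Moreover, the conclusion your argument is built to reach---convexity of the positive cone in $W_\R$---is false, which independently shows the characterization cannot be right. The cubics $x(x-y)(x+y)=x^3-xy^2$ and $y(y-x)(y+x)=y^3-x^2y$ correspond to elements of $W_\R$ whose $S$-forms are the positive definite diagonal matrices $\diag(1/3,1/9)$ and $\diag(1/9,1/3)$, so both are positive; but their average corresponds to $\tfrac12(x-y)^2(x+y)$, which has a repeated root, hence vanishing discriminant and degenerate $S$-form, hence is not positive. (There is no tension with convex geometry: $v\mapsto S(v)$ is quadratic, not linear, so the preimage of the positive definite cone need not be convex.) The intermediate claim that each coordinate projection $\Pi_i$ preserves positivity is likewise unsupported under the correct definition: the $i$th component of $b^\#$ is $b_jb_k$ with $\{i,j,k\}=\{1,2,3\}$, not $b_i^2$, so componentwise positivity of $b^\#-ac$ says nothing direct about $b_i^2-ac_i$. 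The paper's proof instead exploits the $\GL_2(\R)$-equivariance of both the projection $v\mapsto v'$ and the positivity condition to move $v$ into a normal form with $d=0$ and $\tr(b)=0$, from which $b=0$, $a<0$ and $c'>0$ are forced and $v'=(a,0,c',0)>0$ can be read off directly. You would need to restart along those lines.
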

\begin{proof} Recall that $v > 0$ means that the three binary quadratic forms associated to $v$ are each positive-definite.  These three binary quadratic forms we write as 
\[S(v) = \mb{b^\#-ac}{ad-bc-\tr(ad-bc)/2}{ad-bc-\tr(ad-bc)/2}{c^\#-db}.\]
See \cite[Example 4.4.2]{pollackLL}.

Now, note that if $p(x,y)$ is a real binary cubic form, then a $\GL_2$-translate of it is divisible by $x$.  Indeed, this follows from the fact that every real cubic polynomial has a real root.  Thus, by $\GL_2$ equivariance, we may assume $d=0$.  Now, consider $S(v) = \mb{b^\#-ac}{*}{*}{c^\#}$.  Because we assume $v > 0$, we obtain that $c^\#$ is positive definite.  It follows that $c$ is either positive-definite or negative-definite.  By multiplying by $-1$ if necessary, we can assume $c > 0$.  

Now, because $c > 0$, $\tr(c) > 0$ and thus by acting by elements $\overline{n}(\R)$, we can assume $\tr(b) = 0$.  But if $\tr(b) = 0$, then either $b=0$ and $b^\# = 0$, or one component of $b^\#$ is negative.  Because $v > 0$, $b^\# - ac > 0$ and thus since $c > 0$ we must have $a < 0$.  Hence $v' = (a,0,c',0)$ with $a < 0$ and $c' > 0$.  Therefore $v' > 0$, as desired.\end{proof}

\begin{corollary}\label{cor:restrict} Suppose that $f$ is a cuspidal modular form on $\SO(4,4)$ of weight $w$, and denote by $f'$ the restriction of $f$ to $G_2$.  Then $f'$ is a cuspidal modular form on $G_2$ of weight $w$.\end{corollary}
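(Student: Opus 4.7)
The plan is to check two things: (i) $f'$ is a weight-$w$ modular form on $G_2$, and (ii) $f'$ is cuspidal, with (ii) being the substantive point.

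For (i), under the embedding $G_2 \hookrightarrow \SO(V)$ coming from $G_2$'s action on the split octonions, the long-root $\SU(2)$ in the maximal compact of $G_2(\R)$ is identified with the long-root $\SU(2) \subseteq K_V$ used to build $\Vm_w$. Consequently $\Vm_w$ restricts correctly, and the $G_2$-version of the differential operator defining modularity agrees with the restriction of its $\SO(V)$-counterpart, so annihilation by $D_w$ descends from $\SO(V)$ to $G_2$. These compatibilities are spelled out in \cite{pollackQDS, pollackG2}.

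For (ii), I would argue via Fourier expansions along the Heisenberg unipotents. The Heisenberg parabolic of $G_2$ sits compatibly in that of $\SO(V)$: $N_{G_2} \subseteq N_U$ with $[N_{G_2},N_{G_2}] = Z$. Under the identifications $N_U/Z \cong W_E$ for $E = \Q \times \Q \times \Q$, and $N_{G_2}/Z \cong W_\Q$ from \cite{pollackQDS}, the inclusion $N_{G_2}/Z \hookrightarrow N_U/Z$ is dual---via the respective symplectic forms---to precisely the $\GL_2$-equivariant projection $W_E \twoheadrightarrow W_\Q$ of Lemma \ref{lem:push}. Expanding $f$ along $N_U/Z$ and restricting to $G_2$, the $\psi_{v'}$-Fourier coefficient of $f'$ decomposes as
\[
f'_{\psi_{v'}}(g) \;=\; \sum_{v \in W_E(\Q),\, v \mapsto v'} f_{\Psi_v}(g),
\]
where $\Psi_v$ denotes the character of $N_U/Z$ attached to $v$. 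A quaternionic modular form on $G_2$ is cuspidal iff this vanishes for every $v' \in W_\Q$ that fails to be positive-definite.

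Fix such a $v'$. The contrapositive of Lemma \ref{lem:push} forces every preimage $v \in W_E(\Q)$ under the projection to also not be positive-definite. By the archimedean multiplicity-one theory for quaternionic modular forms on $\SO(V)$ (cf.\ \cite{wallach}, \cite{pollackQDS}), a nonzero Fourier coefficient $f_{\Psi_v}$ requires $v$ to be either positive-definite or degenerate (rank $\le 1$ in the language of Proposition \ref{prop:thetaNice}). Cuspidality of $f$ on $\SO(V)$ eliminates the degenerate contributions, while the previous sentence rules out positive-definite $v$'s in the sum. Hence every summand is zero, giving $f'_{\psi_{v'}} = 0$. The main technical hurdle is the identification, in the middle paragraph, of the dual inclusion with the projection of Lemma \ref{lem:push}; with that in hand the lemma together with $\SO(V)$-cuspidality does the rest.
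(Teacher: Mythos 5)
Your proposal is correct and follows essentially the same route as the paper: modularity is inherited via the compatibilities in \cite{pollackQDS}, and cuspidality follows by combining the support of the Fourier expansion of a cuspidal quaternionic form on $\SO(4,4)$ (only $v$ with $S(v)>0$ survive) with Lemma \ref{lem:push}, which you apply in contrapositive form where the paper applies it directly. The extra detail you supply --- identifying the inclusion $N_{G_2}/Z \hookrightarrow N_U/Z$ as dual to the $\GL_2$-equivariant projection $W_E \twoheadrightarrow W_\Q$ and writing $f'_{\psi_{v'}}$ as the sum of $f_{\Psi_v}$ over preimages $v$ --- is exactly what the paper leaves implicit.
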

\begin{proof} By, for example, \cite[Theorem 7.3.1]{pollackQDS}, one can check by hand that $f'$ is a modular form of weight $w$.  Now, the map $G_2 \rightarrow \SO(V) = \SO(4,4)$ factors through $\Spin(8)$.  Or relatedly, the image of the real points $G_2(\R)$ sits in the connected component of the identity of $\SO(4,4)(\R)$, as $G_2(\R)$ is connected.  Because of either of these facts, one sees that the only Fourier coefficients of $f$ that contribute to $f'$ are those that correspond to $v \in W_{E}$ with $S(v) > 0$.  That is, all three binary quadratic forms associated to $v$ must be positive definite.  From Lemma \ref{lem:push}, one sees that $f'$ only has nonzero Fourier coefficients associated to $v' \in W_\R$ with $v'> 0$.  Consequently, $f'$ is cuspidal, as desired.\end{proof}

In case $F(Z)$ is a level one Siegel modular form of weight $w >>0$ as in Theorem \ref{thm:F}, one obtains the following corollary.
\begin{corollary}\label{cor:G2level1} Suppose $F(Z) = \sum_{T > 0}{a_F(T)q^T}$ is a level one Siegel modular form on $\Sp(4)$ of sufficiently large even weight $w$.  For $p(x,y) = ax^3 + bx^2 y + cxy^2 + dy^3$ an integral binary cubic form that factors into three distinct linear factors over $\R$, define
\[
a_{\theta(f)}(p) := \sum_{\widetilde{p}} \sum_{r \in \GL_2(\Z)\backslash (\GL_2(\Q) \cap M_2(\Z)): \widetilde{p}r^{-1} \text{ integral}} |\det(r)|^{w-1} a_F(\,^tr^{-1} S(\widetilde{p}) r^{-1})^*
\]
where the sum is over integral boxes $\widetilde{p} = (a,(b_1,b_2,b_3),(c_1,c_2,c_3),d)$ with $b_1 + b_2 + b_3 = b$ and $c_1 + c_2 + c_3 = c$.  Then there is a cuspidal modular form $\theta(f)|_{G_2}$ on $G_2$ with $p^{th}$ Fourier coefficient $a_{\theta(f)}(p)$. Moreover, the Fourier coefficient of $\theta(f)|_{G_2}$ corresponding to $p(x,y) = x^3 - xy^2$ is $a_F(\mm{1}{0}{0}{1})$. \end{corollary}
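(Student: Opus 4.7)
The plan is to combine the cuspidality of the theta lift on $\SO(4,4)$, the restriction lemma, and the explicit level-one Fourier formula, and then to verify the last identity by direct enumeration of integer boxes.

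Since $V$ is $8$-dimensional (so $k = 0$ and $n = 4$), Rallis's criterion \cite[Chapter I, Section 3]{rallisHD} shows that the theta lift $\theta(f)$ is cuspidal on $\SO(V) = \SO(4,4)$. By Proposition \ref{prop:thetaNice} applied with $N = w + 2 - n/2 = w$, the form $\theta(f)$ is moreover a cuspidal quaternionic modular form of weight $w$ on $\SO(V)$, since the degenerate term $\theta(f;\phi'\otimes\phi_\infty)_{deg}$ vanishes by cuspidality. Corollary \ref{cor:restrict} then shows that $\theta(f)|_{G_2}$ is itself a cuspidal modular form of weight $w$ on $G_2$.

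To obtain the Fourier expansion, I would identify the Heisenberg parabolic $N_U^{G_2}$ inside $N_U^{\SO(V)}$ under the embedding $G_2 \hookrightarrow \SO(V)$, so that characters of $N_U^{G_2}/Z$ (parameterized by integral binary cubics $p \in W_\R$) arise as pullbacks of characters of $N_U^{\SO(V)}/Z$ (parameterized by $v \in W_E$ with $E = \Q \times \Q \times \Q$) via the $\GL_2$-equivariant projection $W_E \to W_\R$, $(a, b, c, d) \mapsto (a, \tr(b)/3, \tr(c)/3, d)$ from Lemma \ref{lem:push}. The $p$-Fourier coefficient of $\theta(f)|_{G_2}$ is then the sum over integer boxes $\widetilde{p}$ projecting to $p$ of the $\widetilde{p}$-Fourier coefficient of $\theta(f)$ as a modular form on $\SO(V)$, and plugging each such coefficient into Theorem \ref{thm:level1} produces the claimed formula for $a_{\theta(f)}(p)$.

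For the last claim, take $p(x, y) = x^3 - x y^2 = x(x - y)(x + y)$. Among integer boxes $\widetilde{p} = (1, (b_1, b_2, b_3), (c_1, c_2, c_3), 0)$ with $\sum b_i = 0$ and $\sum c_i = -1$, the factorization of $p$ into three distinct rational linear forms picks out the distinguished box $\widetilde{p}_0 = (1, (0, -1, 1), (-1, 0, 0), 0)$; under the identification of $V_0^2$ with $W_E$, this box corresponds to an orthogonal pair $(y_1, y_2)$ in $V_0$ with $(y_i, y_i) = 2$, and hence $S(\widetilde{p}_0) = \mm{1}{0}{0}{1}$. The task is then to verify that $\widetilde{p}_0$ is, up to left $\GL_2(\Z)$-action, the unique integer box for which $S(\widetilde{p}) > 0$, and that the only $r \in \GL_2(\Z)\backslash(\GL_2(\Q) \cap M_2(\Z))$ for which both $(y_1, y_2) r^{-1}$ is integral and $\,^t r^{-1} S(\widetilde{p}_0) r^{-1}$ is a half-integral positive-definite matrix is $r = I$, so that the double sum in the corollary collapses to the single term $a_F(\mm{1}{0}{0}{1})$. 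The principal obstacle is exactly this last enumeration: it requires making the identification $V_0^2 \simeq W_E$ explicit and checking, via reduction theory for positive-definite binary quadratic forms over $\Z$, that no other $\widetilde{p}$ or $r$ can contribute.
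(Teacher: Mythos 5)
Your reduction of everything except the last sentence to Rallis's cuspidality criterion, Proposition \ref{prop:thetaNice}, Corollary \ref{cor:restrict}, Theorem \ref{thm:level1}, and the sum over integer boxes lying over $p$ is exactly the paper's (largely implicit) argument, and is fine. The gap is in the final claim, which is the only part of the corollary the paper actually writes out a proof for, and which you leave unproved: you say ``the task is then to verify'' the uniqueness of the contributing box and of $r$, and call this ``the principal obstacle'' --- but that verification \emph{is} the proof. Moreover, your candidate box is wrong. For $\widetilde{p}_0=(1,(0,-1,1),(-1,0,0),0)$ one has $b^\#=(b_2b_3,b_3b_1,b_1b_2)=(-1,0,0)=ac$, so the entry $b^\#-ac$ of $S(\widetilde{p}_0)$ vanishes and the three associated binary quadratic forms are degenerate, not positive definite. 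This is no accident: the box built naturally from a rational factorization $x(x-y)(x+y)$ is reducible and never satisfies $S(\widetilde{p})>0$, so it contributes nothing. Note also that the Fourier coefficient is a sum over \emph{all} boxes lying over $p$, not over $\GL_2(\Z)$-orbits, so ``unique up to left $\GL_2(\Z)$-action'' is not the statement you need.

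The missing enumeration is short and is where the positivity constraints do the work. Writing the target (up to the sign ambiguity of Lemma \ref{lem:push}) as $(-1,0,1,0)$, a contributing box has $a=-1$, $d=0$, $c\in\Z^3$ with $\tr(c)=3$, and $b\in\Z^3$ with $\tr(b)=0$. Positivity forces $c^\#=c^\#-db>0$, i.e.\ $c_ic_j>0$ for all $i\neq j$, so all $c_i$ are positive integers summing to $3$, whence $c=(1,1,1)$. Then $b^\#-ac=b^\#+c>0$ requires $b_ib_j\geq 0$ for all $i\neq j$; for integral $b$ with $\tr(b)=0$ this forces $b=0$ (otherwise some coordinate of $b^\#$ is $\leq -1$). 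So the unique contributing box is $(-1,(0,0,0),(1,1,1),0)$, with $S(\widetilde{p})=\mm{1}{0}{0}{1}$. Finally, for $r\in M_2(\Z)\cap\GL_2(\Q)$ outside $\GL_2(\Z)$ one has $|\det(r)|\geq 2$, so $\,^tr^{-1}\mm{1}{0}{0}{1}r^{-1}$ has determinant at most $1/4$, below the minimum $3/4$ of a positive-definite half-integral symmetric matrix; hence $a_F$ vanishes on it and only the identity coset survives, collapsing the double sum to $a_F(\mm{1}{0}{0}{1})$.
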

\begin{proof} The only thing that remains to be proved is the final statement.  For this, suppose that $v=(a,(b_1,b_2,b_3),(c_1,c_2,c_3),d)$ is a $2 \times 2 \times 2$ integer box with $v > 0$, and $v' = (a,\tr(b)/3,\tr(c)/3),d) = (-1,0,1,0)$.  Then $a=-1$ and $d=0$.  Because $c$ is positive-definite and integral with $\tr(c) = 3$, we must have $c= (1,1,1)$.  Moreover, because $\tr(b)=0$, $b$ is integral, and $b^\# + c > 0$, one must have $b=0$.  Thus the only box $v$ lying above $(-1,0,1,0)$ is $\widetilde{p}=(-1,(0,0,0),(1,1,1),0)$.  Because $S(\widetilde{p}) = \mm{1}{0}{0}{1}$, the corollary follows.\end{proof}

\bibliographystyle{amsalpha}
\bibliography{QDS_Bib} 

\end{document}